\newtheorem{thm}{Theorem}
\newtheorem{lem}[thm]{Lemma}
\newtheorem{prop}[thm]{Proposition}
\newtheorem{cor}[thm]{Corollary}
\newtheorem{defn}{Definition}
\newtheorem{exa}{Example}
\newtheorem{rem}{Remark}
\begin{document}

\date{}

\title{On linear differential equations with reductive Galois group}

\author{Camilo Sanabria Malag\'on\footnote{partially supported by NSF grant CCF 0901175 and CCF 0952591}}

\maketitle

\begin{abstract}
Given a connection on a meromorphic vector bundle over a compact
Riemann surface with reductive Galois group, we associate to it a
projective variety.  Connections such that their associated
projective variety are curves can be classified, up to projective
equivalence, using ruled surfaces. In particular, such a
meromorphic connection is the pullbacks of a Standard connection.
This extend a similar result by Klein for second-order ordinary
linear differential equations to a broader class of equations.
\end{abstract}

\section*{Introduction}

In ~\cite{baldassarri}, ~\cite{baldassarri2} Baldassari and Dwork
give a contemporary formulation of a result known to Klein
~\cite{klein},~\cite{klein2} on second order ordinary linear
differential equations with algebraic solutions. The result is
most easily stated in terms of projective equivalence (cf. Definition \ref{defnprojequiv}).

\begin{thm}
If an ordinary second order linear differential equation with rational coefficients has
finite projective Galois group, it is projectively equivalent to a
pullback, by a rational map, of a hypergeometric equation.
\end{thm}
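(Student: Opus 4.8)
The plan is to reduce everything to the behaviour of the ratio of two independent solutions and to the Schwarzian derivative, which is the natural projective invariant of a second-order equation. First I would replace the given equation $y''+ay'+by=0$ by its projectively normalized form $w''+rw=0$, where $r$ is a rational function; passing to this normal form is a projective equivalence (cf. Definition~\ref{defnprojequiv}), so it costs nothing. The ratio $u=w_1/w_2$ of two independent solutions is then a multivalued function on $\mathbb{P}^1$ whose monodromy is exactly the projective Galois group $G\subset\mathrm{PGL}_2(\mathbb{C})$, assumed finite, and it satisfies the Schwarzian equation $\{u,x\}=2r$. Since the Schwarzian is invariant under post-composition by M\"obius transformations, the function $2r=\{u,x\}$ is single-valued even though $u$ is not.

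Next I would invoke the classification of finite subgroups of $\mathrm{PGL}_2(\mathbb{C})$ (cyclic, dihedral, tetrahedral, octahedral and icosahedral) and, for the group $G$ at hand, fix the corresponding quotient map $J\colon \mathbb{P}^1\to\mathbb{P}^1/G\cong\mathbb{P}^1$, the absolute invariant generating $\mathbb{C}(t)^G$. To $G$ one also attaches a standard hypergeometric equation $E_G$, normalized so that the ratio $\phi$ of its solutions develops the covering $\mathbb{P}^1\to\mathbb{P}^1/G$, i.e. $J(\phi(z))=z$. The finiteness of $G$ forces $u$ to have only finite-order branching at the singular points, so the composite $f=J\circ u$, being $G$-invariant and hence single-valued, extends to a meromorphic, i.e. rational, map $f\colon\mathbb{P}^1_x\to\mathbb{P}^1_z$.

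The comparison is then the heart of the matter. Consider $\phi\circ f$, the ratio of solutions of the pullback $f^{*}E_G$. From $J(\phi(f(x)))=f(x)=J(u(x))$ one gets that $\phi\circ f$ and $u$ have the same image under $J$, so on every branch they differ by an element of $G\subset\mathrm{PGL}_2(\mathbb{C})$; by M\"obius-invariance of the Schwarzian this yields $\{\phi\circ f,x\}=\{u,x\}=2r$ on an open set, hence as rational functions. Equal Schwarzians mean identical normal forms, so the original equation is projectively equivalent to $f^{*}E_G$, a pullback of the hypergeometric equation $E_G$ by the rational map $f$, as required.

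I expect the main obstacle to be the two regularity claims underlying the construction of $f$: first, that finiteness of the projective Galois group confines the local monodromy of $u$ to finite-order elements, so that $J\circ u$ has at worst poles, not essential singularities, at the singular points and over the branch points of $J$; and second, that $f$ is genuinely rational rather than merely locally meromorphic. Both hinge on controlling $u$ at the singular points, where one must match the local exponents of the equation with the ramification data of $J$; getting this local bookkeeping to close up is precisely what makes the pullback exact rather than merely an isomorphism of projective monodromy.
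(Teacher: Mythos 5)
Your proposal is correct and follows essentially the same route the paper sketches for this theorem in its introduction: your map $f=J\circ u$ is precisely the paper's composition $t$ of $x\mapsto(y_1(x):y_2(x))$ with the quotient $\mathbb{P}^1(\mathbb{C})\rightarrow\mathbb{P}^1(\mathbb{C})/G$, with the hypergeometric equation attached to the covering $\mathbb{P}^1(\mathbb{C})\rightarrow\mathbb{P}^1(\mathbb{C})/G$ and $t$ serving as the rational pullback map. The Schwarzian-derivative comparison you supply is the standard (Klein, Baldassarri--Dwork) way of completing that sketch into a full proof of the projective equivalence.
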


The collection of hypergeometric equations appearing in Klein's
theorem can be classified using Schwartz triples as they
correspond to Galois coverings of the Riemann Sphere by another
Riemann Sphere. For each finite group in $PGL_2(\mathbb{C})$ there
is one hypergeometric equation.

In broad terms the argument is as follows: let $y_1$ and $y_2$ be
two $\mathbb{C}$-linearly independent solutions for an equation
satisfying the hypothesis of the theorem and let $G\subseteq
GL_2(\mathbb{C})$ be its differential Galois group. The covering
and the pullback from the theorem arise by taking the composition
\[
t:\mathbb{P}^1(\mathbb{C})\longrightarrow\mathbb{P}^1(\mathbb{C})/G\simeq\mathbb{P}^1(\mathbb{C})
\]
given by:
\[
\xymatrix{
 &  & (y_1(x):y_2(x))\ar@{|->}[d] & \mathbb{P}^1(\mathbb{C})\ar[d] \\
(x:1)\ar@{|->}[urr]\ar@{|-->}[rr]_{t} & & (y_1:y_2)\cdot G & \mathbb{P}^1(\mathbb{C})/G
}
\]
where $G$ is acting by M\"oebius transformations on
$\mathbb{P}^1(\mathbb{C})$. If $t$ is an isomorphism then the
equation is hypergeometric; if not, then it gives the rational
pullback map. When $t$ is an isomorphism, the hypergeometric functions $(y_1:y_2)$ locally give sections of the covering  $\mathbb{P}^1(\mathbb{C})\rightarrow \mathbb{P}^1(\mathbb{C})/G$.

An extension of this result for the third order case was obtained
by M. Berkenbosch ~\cite{ber}, where he also gives an algorithmic
implementation of Klein's result simplifying Kovacic's
algorithm ~\cite{kovacic}. Berkenbosch introduces the concept of
``standard equations'' in order to state his generalization. A
standard equation is an ordinary linear differential equation
which is minimal in the sense that any other ordinary linear
differential equation with finite projective Galois group must be
a pullback thereof. The collection of standard equations is
infinite (even for a fixed group $G$!) and so far it lacked of structure; therefore classifying these equations has not been done yet.

The purpose of this article is to formulate an extension of
Klein's theorem for equations of arbitrary order that also covers
many non-algebraic cases and to give a classification of the
standard equations using ruled surfaces. We treat the problem in
terms of differential modules and connections.

Our main tool in achieving this extension is Compoint's theorem.
This result gives a very concrete description of the maximal
differential ideal involved in the construction of a
Picard-Vessiot extension for the connection. In the first part of
this article we introduce the geometric concepts involved. In the
second we give an algebraic interpretation of these concepts. In
the third and final part, we prove the generalization and introduce the
classifying ruled surfaces. In the appendix we study some
of the consequences for the algebraic case.

We finish this introduction by studying and re-interpreting the original mapping considered by Klein \cite{baldassarri, baldassarri2, klein, klein2} and Fano \cite{fano} so that our classifying ruled surface becomes apparent.

\subsection*{Motivation: The algebraic case}

Consider an irreducible homogeneous ordinary linear differential equation
\[
L(y)=\frac{d^n}{dz^n}y+a_{n-1}\frac{d^{n-1}}{dz^{n-1}}y+\ldots+a_1\frac{d}{dz}y+a_0y=0,
\]
with $a_{n-1},\ldots,a_1,a_0\in\mathbb{C}(z)$. Let $U\subseteq\mathbb{C}$ be an open set avoiding the set of singularities $S$ of $L(y)=0$ and admitting $n$ $\mathbb{C}$-linearly independent solutions
\[
y_i:U\longrightarrow \mathbb{C},\quad i\in\{0,1,\ldots,n-1\}.
\]
These $n$ solutions define an analytic map
\begin{eqnarray*}
U &\longrightarrow & \mathbb{P}^{n-1}(\mathbb{C})\\
z &\longmapsto     & (y_0(z):y_1(z):\ldots:y_{n-1}(z)).
\end{eqnarray*}
By analytic continuation, we can extend this map to a multi-valued map on all $\mathbb{C}\setminus S$. The map is multi-valued up to monodromy, therefore, as the differential Galois group $G\subseteq GL_n(\mathbb{C})$ of the equation contains its monodromy group, our analytic continuation becomes single-valued if we post-compose it by the quotient map
\[
\mathbb{P}^{n-1}(\mathbb{C})\longrightarrow\mathbb{P}^{n-1}(\mathbb{C})/G,
\]
where the linear group $G$ is acting by projective linear automorphisms. We denote the resulting composition by $t: \mathbb{C}\setminus S\rightarrow \mathbb{P}^{n-1}(\mathbb{C})/G$.

Let us assume for the rest of this introduction that $G$ is finite and so it coincides with the monodromy group of $L(y)=0$. In particular, under such assumption, the $y_i$'s are algebraic over $\mathbb{C}(z)$.

As we pointed out, in the case where $n=2$, $t$ is injective implies that $L(y)=0$ is a
hypergeometric equation. Berkenbosch's idea was to consider the case when $n=3$ calling $L(y)=0$ a standard equation if $t$ is injective \cite{ber}. So with Berkenbosch's approach the standard equation would no longer be identified with the quotient map $\mathbb{P}^{2}(\mathbb{C})\rightarrow\mathbb{P}^{2}(\mathbb{C})/G$, but by the pair: the quotient map together with the Zariski closure of the image of the multi-valued analytic map
\begin{eqnarray*}
\mathbb{C}\setminus S &\longrightarrow & \mathbb{P}^{2}(\mathbb{C})\\
z &\longmapsto     & (y_0(z):y_1(z):y_2(z)).
\end{eqnarray*}
To obtain the quotient map one just needs the know the Galois group of $L(y)=0$, to obtain the genus of the closure of the image one relies on the exponents of the singularities $S$ (see Appendix). The drawback of this approach is that although the genus of the image does not depend on the choice of the $y_i$'s, the homogeneous polynomial $P[X_0,X_1,X_2]$ vanishing at the triple $(y_0,y_1,y_2)$ does, making hard to identify exactly which standard equation it corresponds to, complicating the identification of the map $t$ and, therefore, any algorithmic implementation of the result.

In this paper we will deviate from this approach and work instead with line bundles and ruled surfaces. First, note that since $t$ is algebraic then we can extend its domain from $\mathbb{C}\setminus S$ to $\mathbb{P}^1(\mathbb{C})$. Secondly, the map $t$ defines a line bundle $\mathscr{L}$ (cf. \cite[Section II.7]{hart}). Thirdly, we can obtain a second line bundle $\mathscr{L}'$ by working with the map $t_1$ defined similarly to $t$ but using the $n$-tuple $(y_0',y_1',\ldots,y_{n-1}')$ instead of $(y_0,y_1,\ldots,y_{n-1})$. Finally, the projective space bundle defined by the rank-2 vector bundle $\mathscr{L}\oplus\mathscr{L}'$ will characterize the standard equation. Those interested in seeing which ruled surfaces correspond to the second order equations can jump to the Appendix and then comeback to read the rest of the paper.

To deal with the non-algebraic cases we will rely on matrix differential equations instead of linear differential equations. So instead of using the map given by $n$ linearly independent solutions, we will use the map given by the full system of solutions. In order to obtain a coordinate-free description we will rely on the concept of connections.

\section{Geometric considerations}

We first establish notation. $X$ denotes a compact Riemann
surface, $k:=\mathbb{C}(X)$ is the associated field of meromorphic
functions, and we consider a rank $n$ meromorphic vector bundle
over $X$ induced by a holomorphic vector bundle $\Pi: E\rightarrow
X$ (cf.~\cite{morales}). By abuse of notation we will use $\Pi:
E\rightarrow X$ to describe both: the holomorphic and the induced
meromorphic vector bundle.

The sheaf of meromorphic sections of $\Pi$ will be denoted by
$\mathscr{E}$, and the sheaf of meromorphic functions over $X$ by
$\mathscr{M}$. The concept of the sheaf $\mathscr{E}$ of
meromorphic sections of a holomorphic vector bundle can be found
in full detail in~\cite{Forster}. The sheaf of meromorphic
$1$-forms, and the sheaf of meromorphic tangent fields, will be
denoted by $\Omega^1_\mathscr{M}$ and $\mathscr{T}X$ respectively.
The sheaf of differential forms $\Omega^1_\mathscr{M}$  is the
meromorphic dual of $\mathscr{T}X$ (cf.~\cite{morales}). Given an
$f\in k$ there is a global meromorphic differential form
$df\in\Omega^1_\mathscr{M}(X)$ defined as follows:
\begin{eqnarray*}
df: \mathscr{T}X(X) & \longrightarrow & k\\
v & \longmapsto & df(v): p \mapsto v_p(f).
\end{eqnarray*}

Any global tangent field $v\in \mathscr{T}X(X)$  induces a
derivation in $k$, i.e. the map
\begin{eqnarray*}
v:k & \longrightarrow & k\\
  f & \longmapsto & v(f):p\mapsto v_p(f)
\end{eqnarray*}
is additive and satisfies the Leibniz rule:
\begin{eqnarray*}
v(f+g) & = & v(f)+v(g)\\
v(fg)  & = & v(f)g+fv(g), \quad \forall (f,g)\in k^2.
\end{eqnarray*}

Once we fix $v$, the field $k$ together with the derivation
defined by $v$ is a differential field. The field of complex
numbers $\mathbb{C}$ can be identified with a subfield of $k$ by
regarding the complex numbers as constant functions. With this
identification in mind we see that the kernel of $v$, known as the
\emph{constants} of the differential field, is $\mathbb{C}$
provided $v\ne 0$. For let $x\in k$ be such that $x$ is not a
constant, then $k$ is an algebraic extension of $\mathbb{C}(x)$.
Furthermore the derivation $d/dx$ of $\mathbb{C}(x)$ extends
uniquely to a no-new-constants derivation $v_x$ of $k$
\cite[Exercises 1.5.3]{SvdP}, that is $\{f\in k|\
v_x(f)=0\}=\mathbb{C}$. Thus if $v\ne 0$, since
$\mathscr{T}X(X)\simeq k$, there exists a unique $h\in k^*$ such
that $v=hv_x$. So the constants of $v$ and the constants of $v_x$
coincide.

\begin{rem}
In broad terms what we do in this article is to study the
following geometric construction. Consider a matrix differential
equation
\[
v(f^i)=a^i_jf^j,\quad i\in\{1,\ldots,n\},\quad a^i_j\in k
\]
and an open $U\subset X$ over which we have a full-system of
solutions $(y^i_j)$, i.e. $y^i_j\in\mathscr{M}(U)$ for
$i,j\in\{1,\ldots,n\}$ and $\det(y^i_j)\ne 0$. The analytic map
\begin{eqnarray*}
U & \longrightarrow & GL_n(\mathbb{C})\\
p & \longmapsto       & (y^i_j(p))
\end{eqnarray*}
induces an algebraic map
\begin{eqnarray*}
U & \longrightarrow & GL_n(\mathbb{C})/G\\
p & \longmapsto       & (y^i_j(p))\cdot G
\end{eqnarray*}
where $G$ is the Galois group of our linear differential equation
which we will assume reductive.

Indeed, $\mathbb{C}[\textrm{GL}_n/G]\simeq \mathbb{C}[X^i_j]^G$
(Hilbert 14th), so let $P_1,\ldots,P_r$ be a set of generators of
$\mathbb{C}[X^i_j]^G$. In the coordinate system $(P_1,\ldots,P_r)$
of $\textrm{GL}_n(\mathbb{C})/G$, the map $p\mapsto
(y^i_j(p))\cdot G$ is given by $p\mapsto
(P_1(y^i_j(p)),\ldots,P_r(y^i_j(p)))$; and by Galois
Correspondence $P_l(y^i_j)=f_l\in k$. Therefore in the this
coordinate system, $p\mapsto (y^i_j(p))\cdot G$ is given by
$p\mapsto (f_1(p),\ldots,f_r(p))$.

Because the last map is algebraic, it can be extended to a
meromorphic map defined globally over $X$. The idea is to see to
which extend this last map characterizes our differential
equation.
\end{rem}

\begin{rem}
The maps above, $p\mapsto (y^i_j(p))$ and $p\mapsto
(y^i_j(p))\cdot G$, depend on the choices of a full-system of
solutions and of an open set $U$. Therefore, the first thing we
will do is to argue bi-rationally that the geometric properties of
the image of this maps does not depend on this choices (see Remark
\ref{fanowelldef}). For that we will need to provide a
coordinate-free description of this image, which will be done by
taking a symmetric algebra characterizing the image of the map
(Definition \ref{symalg} and Definition \ref{fanosymalg}).
\end{rem}

\subsection{Differential Modules}

\begin{rem}
We will use Einstein's notation for indices.
\end{rem}

Fix a non-trivial derivation $v\in\mathscr{T}X(X)$ of $k$. We
recall briefly the concept of differential module, which is a
coordinate-free description of a matrix differential equation. A
more detailed exposition may be found in~\cite{SvdP}.

\begin{defn}\label{defdiffmof}
A \emph{differential $k$-module} (rigorously a $(k,v)$-module) is
a finite dimensional $k$-vector space $M$ together with an
additive map
\[
\partial:M\rightarrow M
\]
satisfying the Leibnitz rule:
\begin{eqnarray*}
\partial(m_1+m_2) & = & \partial m_1+\partial m_2\qquad \forall (m_1,m_2)\in M^2\\
\partial fm & = & v(f)m+f\partial m\quad \forall (f,m)\in k\times M
\end{eqnarray*}
An $m\in M$ such that $\partial m=0$ is called a \emph{horizontal element}.
\end{defn}

\begin{rem}
If $f$ is a constant, i.e. if $v(f)=0$, the Leibnitz rule implies
that $\partial fm=f\partial m$. The collection of horizontal
elements thus forms a vector space over the field of constants
$\mathbb{C}$.
\end{rem}

\begin{rem}
Fix a basis $e_1,\ldots,e_n$ of $M$ and set
\[
\partial e_j=-a^i_je_i\quad \forall j\in\{1,\ldots,n\}.
\]
If $m=f^ie_i$, then
\begin{eqnarray*}
\partial m & = & v(f^i)e_i+f^i\partial e_i\\
           & = & v(f^i)e_i-f^ia^j_ie_j\\
           & = & (v(f^i)-a^i_jf^j)e_i.
\end{eqnarray*}
Solving the equation $\partial m=0$ therefore amounts to solve the
matrix differential equation
\[
v(f^i)=a^i_jf^j,\quad i\in\{1,\ldots,n\}.
\]
\end{rem}

\begin{prop}
Let $(M,\partial)$, $(M_1,\partial_1)$ and $(M_2,\partial_2)$ be three differential modules, then:
\begin{itemize}
\item[i)] The tensor product $M_1\otimes M_2$ inherits a
differential $k$-modules structure under the map:
\[
\partial_1\otimes\partial_2: m_1\otimes m_2\longrightarrow \partial_1 m_1\otimes m_2+ m_1\otimes \partial_2 m_2
\]
\item[ii)] The symmetric power $Sym^d(M)$ inherits a differential
$k$-module structure as a quotient of the tensor product of $d$
copies of $M$, $M^{\otimes d}$.
\item[iii)] The exterior power
$\bigwedge^d M$ inherits a differential $k$-module structure as a
quotient of $M^{\otimes d}$.
\item[iv)] The dual
$M^*=\textrm{Hom}_k(M,k)$ inherits a differential $k$-module
structure under the map:
\[
\partial^*: \mu\longmapsto [m\mapsto v(\mu(m))-\mu(\partial m)]
\]
\item[v)] The space of $k$-linear morphisms
$\textrm{Hom}_k(M_1,M_2)=M^*_1\otimes M_2$ inherits a differential
$k$-module structure $\partial_1^*\otimes\partial_2$. In
particular if $H\in\textrm{Hom}_k(M_1,M_2)$ is such that
$(\partial_1^*\otimes\partial_2) H=0$ then $\partial_2\circ
H=H\circ \partial_1$.
\end{itemize}
\end{prop}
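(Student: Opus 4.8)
The plan is to verify each of the five claims directly from Definition \ref{defdiffmof}, checking in each case that the proposed additive map satisfies the Leibniz rule with respect to $v$. The construction strategy is uniform: parts (ii), (iii) and (v) are all obtained from (i) and (iv) by passing to quotients or subobjects, so the real work lies in establishing (i) and (iv) and then checking that the induced structures descend.

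\begin{proof}
For (i), I would first check that the map $\partial_1\otimes\partial_2$ is well-defined on the tensor product $M_1\otimes_k M_2$. The expression $\partial_1 m_1\otimes m_2 + m_1\otimes\partial_2 m_2$ is clearly biadditive, but since the tensor product is over $k$ we must verify $k$-balancing: for $f\in k$, one computes that $\partial_1(fm_1)\otimes m_2 + fm_1\otimes\partial_2 m_2$ and $\partial_1 m_1\otimes fm_2 + m_1\otimes\partial_2(fm_2)$ both equal $f(\partial_1 m_1\otimes m_2 + m_1\otimes\partial_2 m_2) + v(f)(m_1\otimes m_2)$, the two extra $v(f)$-terms agreeing by the defining relation $fm_1\otimes m_2 = m_1\otimes fm_2$ in $M_1\otimes_k M_2$. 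The additivity of the resulting map is immediate, and the Leibniz rule on a general element follows by additivity once it is checked on elementary tensors $f(m_1\otimes m_2)$, where a short computation using $v(fg)=v(f)g+fv(g)$ yields $v(f)(m_1\otimes m_2)+f\cdot(\partial_1\otimes\partial_2)(m_1\otimes m_2)$ as required.

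For (ii) and (iii), I would observe that $Sym^d(M)$ and $\bigwedge^d M$ are quotients of $M^{\otimes d}$ (whose differential structure is the $d$-fold iterate of the construction in (i)) by the submodules generated, respectively, by $m_1\otimes\cdots\otimes m_d - m_{\sigma(1)}\otimes\cdots\otimes m_{\sigma(d)}$ and by tensors with a repeated factor. The only point to check is that these submodules are $\partial$-stable, so that $\partial$ descends to the quotient; this is a direct consequence of the symmetry of the Leibniz formula in (i) under permutation of factors, which sends a symmetrizing (resp. antisymmetrizing) relation to a sum of such relations. For (iv), additivity of $\partial^*$ is clear, and the Leibniz rule amounts to checking for $f\in k$ and $\mu\in M^*$ that $\partial^*(f\mu)$ and $v(f)\mu + f\partial^*\mu$ agree as functionals on every $m\in M$, which follows from expanding $v(f\mu(m)) = v(f)\mu(m)+fv(\mu(m))$ and canceling. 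Finally, (v) is the composite of (iv) applied to $M_1$ and (i) applied to $M_1^*$ and $M_2$, using the canonical identification $\mathrm{Hom}_k(M_1,M_2)\cong M_1^*\otimes M_2$; the concluding statement that $(\partial_1^*\otimes\partial_2)H=0$ forces $\partial_2\circ H = H\circ\partial_1$ comes from writing out the horizontality condition on a pure tensor and recognizing it as exactly the intertwining relation.

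I expect the main obstacle to be the well-definedness in (i), that is, the $k$-balancing of $\partial_1\otimes\partial_2$ over the tensor product: this is the one place where the non-multiplicativity of $v$ interacts nontrivially with the tensor relations, and it is precisely the step that guarantees $\partial$ on $M^{\otimes d}$ is a genuine operator rather than merely a biadditive map. Everything downstream, namely the descent to symmetric and exterior powers and the identification in (v), is then a formal consequence of (i) and (iv) together with the universal properties of the relevant quotients.
\end{proof}
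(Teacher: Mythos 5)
Your proof is correct, but it does not follow the paper's route, for the simple reason that the paper barely gives one: for items (i)--(iv) the paper just cites van der Put and Singer \cite{SvdP}, and the only thing it proves is (v), by writing $H=f^i\otimes e_i$ in Einstein notation, evaluating the horizontality hypothesis at $m\in M_1$, and unwinding the definition of $\partial_1^*$ --- which is exactly the computation you sketch at the end. So where the paper actually argues, you take the same path; where it cites, you supply the standard verifications. What your write-up buys is the identification of the one genuinely non-formal point, namely the $k$-balancedness of $\partial_1\otimes\partial_2$: since $\partial_1\otimes\partial_2$ is not $k$-linear, it is not automatic that it descends to the tensor product over $k$, and your computation showing that the two $v(f)$-terms match across the relation $fm_1\otimes m_2=m_1\otimes fm_2$ is precisely what makes the whole proposition work; the paper's economy hides this. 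Two spots in your argument deserve tightening. In (iii), stability of the alternating relations under $\partial$ is not an immediate permutation-symmetry statement if you generate the kernel by tensors with a repeated factor: applying $\partial$ to such a tensor produces two cross terms, $\cdots\otimes\partial m\otimes\cdots\otimes m\otimes\cdots$ plus $\cdots\otimes m\otimes\cdots\otimes\partial m\otimes\cdots$, and these lie in the submodule only after a polarization identity (or, since $k\supseteq\mathbb{C}$ has characteristic zero, after rewriting the alternating relations as $w+\tau w$ for transpositions $\tau$, at which point your symmetric-case argument applies verbatim). In (v), a general $H$ is a \emph{sum} of pure tensors and the individual summands of a horizontal $H$ need not themselves be horizontal; the clean fix is to observe that the identity $\bigl((\partial_1^*\otimes\partial_2)H\bigr)(m)=\partial_2(H(m))-H(\partial_1 m)$ holds for \emph{every} $H$ (check it on pure tensors, extend by additivity of evaluation), and then horizontality of $H$ yields the intertwining relation --- this is what the paper's summation-convention computation does implicitly. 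Neither point is a genuine gap; both are repaired by standard arguments.
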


\begin{proof}
Items $i)$, $ii)$, $iii)$, $iv)$ are in~\cite{SvdP}. To see $v)$,
write $H=f^i\otimes e_i$ for some $f^i\in M_1^*$ and some $e_i\in
M_2$, so that if $m\in M_1$ then $H(m)=f^i(m)e_i$. Now by
hypothesis $\partial_1^*f^i\otimes e_i+f^i\otimes \partial_2
e_i=0$, therefore
\begin{eqnarray*}
H(\partial_1 m) & = & f^i(\partial_1 m)e_i\\
                & = & v(f^i(m))e_i-\partial^*_1f^i(m)e_i\\
                & = & v(f^i(m))e_i+f^i(m)\partial_2 e_i\\
                & = & \partial_2 H(m)
\end{eqnarray*}
\end{proof}

\begin{rem}
Summarizing, a differential $k$-module endows a canonical
differential structure on any tensorial construction (duals,
tensor products, symmetric powers, exterior powers, sums,
$\ldots$) over $k$.
\end{rem}

\begin{rem}
Given a differential $k$-module $M$, we will not always be able to
find a basis composed of horizontal elements.
\end{rem}

The definition of a differential $k$-module depends on our choice
of a derivation on $k$ (cf. Definition \ref{defdiffmof}), in our
case it was $v$. Since we are in the realm of Riemann surfaces,
which are actually one dimensional manifolds over the complex
numbers, we can circumvent this restriction using connections.

\subsection{Connections and Pullbacks}

A meromorphic connection is a $\mathbb{C}$-linear map (linear over the constants)
\begin{eqnarray*}
\nabla: \mathscr{E} & \longrightarrow & \mathscr{E}\bigotimes_\mathscr{M}\Omega^1_\mathscr{M}
\end{eqnarray*}
satisfying the Leibnitz rule
\[
\nabla(fV)=V\otimes df+f\nabla V\qquad \forall (f,V)\in k\times\mathscr{E}(X)
\]
(recall $\mathscr{E}$ is the sheaf of meromorphic sections of the
vector bundle $\Pi: E\rightarrow X$, $\mathscr{M}$ the sheaf of
meromorphic functions and $\Omega^1_\mathscr{M}$ the sheaf of
meromorphic differential forms).

Given a meromorphic tangent field $v$, we define the $\mathscr{M}$-linear contraction map by
\begin{eqnarray*}
\imath_v:\ \mathscr{E}\bigotimes_\mathscr{M}\Omega^1_\mathscr{M} & \longrightarrow & \mathscr{E}\\
V\otimes\omega & \longmapsto & \omega(v)V.
\end{eqnarray*}
Not every element in $\mathscr{E}\otimes_k\Omega^1_\mathscr{M}$
can be written in the form $V\otimes\omega$, but as the map is
$\mathscr{M}$-linear, it suffices to define the map in such
elements. The composition of $\nabla$ followed by this contraction
map is denoted by $\nabla_v$. This is commonly called the
\emph{covariant derivative} along $v$.

\begin{rem}
The $k$-vector space of global sections $\mathscr{E}(X)$ of $\Pi$
is isomorphic to $k^n$, so $(\mathscr{E}(X),\nabla_v)$ is a
differential $k$-module.
\end{rem}

\begin{prop}
The connection $\nabla$ induces a $(k,v)$-module structure on
$\mathscr{E}(X)$ under the map $\nabla_v$. Conversely a
$(k,v)$-module structure on $\mathscr{E}(X)$ determines a
connection on $\mathscr{E}$.
\end{prop}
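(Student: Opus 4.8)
The plan is to run both directions through one structural observation: because $X$ is one-dimensional and $v\ne 0$, the sheaf $\Omega^1_\mathscr{M}$ is a line bundle trivialised by the $1$-form dual to $v$, and consequently the contraction $\imath_v$ is an isomorphism. Concretely, since $\mathscr{T}X(X)\simeq k$ and $v\ne 0$, the field $v$ forms a $k$-basis of $\mathscr{T}X(X)$; let $\omega_0\in\Omega^1_\mathscr{M}(X)$ be the dual basis, i.e. the unique meromorphic $1$-form with $\omega_0(v)=1$. Then every meromorphic $1$-form satisfies $\omega=\omega(v)\,\omega_0$, and in particular
\[
df=v(f)\,\omega_0\qquad\forall f\in k,
\]
since pairing $df=g\,\omega_0$ with $v$ gives $g=df(v)=v(f)$. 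With this in hand, $\imath_v$ becomes an $\mathscr{M}$-linear isomorphism $\imath_v:\mathscr{E}\otimes_\mathscr{M}\Omega^1_\mathscr{M}\to\mathscr{E}$ whose inverse is $V\mapsto V\otimes\omega_0$; this is precisely what makes connections and covariant derivatives interchangeable.

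For the forward direction I would simply evaluate $\nabla_v=\imath_v\circ\nabla$ on a product $fV$. Using the Leibnitz rule for $\nabla$ together with the $\mathscr{M}$-linearity of $\imath_v$,
\[
\nabla_v(fV)=\imath_v\bigl(V\otimes df+f\,\nabla V\bigr)=df(v)\,V+f\,\nabla_v V=v(f)\,V+f\,\nabla_v V,
\]
while additivity is inherited from the $\mathbb{C}$-linearity of $\nabla$ and $\imath_v$. Thus $\nabla_v$ satisfies exactly the Leibnitz rule of Definition \ref{defdiffmof}, so $(\mathscr{E}(X),\nabla_v)$ is a $(k,v)$-module, as already observed in the remark preceding the statement.

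For the converse, given a $(k,v)$-module structure $\partial$ on $\mathscr{E}(X)$ I would define $\nabla:=\imath_v^{-1}\circ\partial$, that is $\nabla V:=(\partial V)\otimes\omega_0$, and verify the connection axioms. $\mathbb{C}$-linearity is inherited from $\partial$, and for the Leibnitz rule I compute, using $\partial(fV)=v(f)V+f\,\partial V$ and the identity $df=v(f)\,\omega_0$,
\[
\nabla(fV)=\bigl(v(f)V+f\,\partial V\bigr)\otimes\omega_0=V\otimes v(f)\omega_0+f\,(\partial V)\otimes\omega_0=V\otimes df+f\,\nabla V.
\]
Contracting again recovers $\nabla_v=\imath_v\circ\imath_v^{-1}\circ\partial=\partial$, so the two constructions are mutually inverse. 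To promote $\nabla$ from $\mathscr{E}(X)$ to a morphism of sheaves, I express an arbitrary local meromorphic section over $U$ in a global frame $e_1,\dots,e_n$ of $\mathscr{E}(X)$ and set $\nabla(f^ie_i):=e_i\otimes df^i+f^i\,\nabla e_i$ for $f^i\in\mathscr{M}(U)$; the Leibnitz rule guarantees independence of the chosen frame, hence a well-defined sheaf map.

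The one genuinely load-bearing step, the rest being formal, is the existence of the global dual form $\omega_0$ and the identity $df=v(f)\,\omega_0$: this is exactly where the hypothesis that $X$ is a one-dimensional Riemann surface enters, since it trivialises $\Omega^1_\mathscr{M}$ and turns $\imath_v$ into an isomorphism. The only remaining point requiring care is the sheaf-theoretic extension in the converse, namely checking that the frame-wise formula glues to a well-defined $\mathbb{C}$-linear morphism $\mathscr{E}\to\mathscr{E}\otimes_\mathscr{M}\Omega^1_\mathscr{M}$, which again is a direct consequence of the Leibnitz identity.
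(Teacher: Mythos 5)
Your proof is correct and follows essentially the same route as the paper: both hinge on the observation that, since $\Omega^1_\mathscr{M}$ is one-dimensional, the form dual to $v$ (your $\omega_0$, the paper's $\nu$) trivialises it, so that $\imath_v$ is an isomorphism with inverse $V\mapsto V\otimes\omega_0$, making connections and covariant derivatives interchangeable. You are in fact more explicit than the paper, which omits the verification of the two Leibnitz rules and the sheaf-theoretic extension that you carry out; these additions are sound and fill in details the paper leaves implicit.
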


\begin{proof}
Derivations on $k$ are in a natural bijective correspondence with
global sections of $\mathscr{T}X$, and $\mathscr{T}X(X)\simeq k$.
Suppose $\nu\in\Omega^1_\mathscr{M}$ is the dual of a non-zero
derivation $v\in\mathscr{T}X$, i.e. $\nu(v)=1$. Because
$\Omega^1_\mathscr{M}$ is one-dimensional, for every
$\omega\in\Omega^1_\mathscr{M}$ we have $\omega=\omega(v)\nu$, and
we conclude that tensoring with $\nu$ is the inverse to
$\imath_v$. Define the following map $\nabla'$:
\begin{eqnarray*}
\nabla': \mathscr{E} &\longrightarrow & \mathscr{E}\bigotimes_\mathscr{M}\Omega^1_\mathscr{M}\\
   V & \longmapsto & \nabla_vV\otimes\nu.
\end{eqnarray*}
Then
\begin{eqnarray*}
\nabla'V & = & \nabla_vV\otimes\nu\\
           & = & \imath_v(\nabla V)\otimes\nu\\
           & = & \nabla V.
\end{eqnarray*}
Endowing $\mathscr{E}(X)$ with a differential $k$-module structure
is thereby seen to be the same as defining a meromorphic
connection over $\mathscr{E}$.
\end{proof}

\begin{rem}
As a corollary of the previous proposition we obtain that every
tensorial construction over meromorphic vector bundles with
connections inherits a connection in a similar way as it happens
with differential modules. This last remark is actually
independent of the fact that $X$ is one-dimensional.
\end{rem}

We now define connection pullbacks.

\begin{defn}
Take a meromorphic vector bundle $\Pi_0:E_0\rightarrow X_0$ over a
compact Riemann surface together with a meromorphic connection
$\nabla_0$ and a morphism $f:X\rightarrow X_0$. If
$\Pi:E\rightarrow X$ is the pullback bundle of
$\Pi_0:E_0\rightarrow X_0$, and
$(\overline{f},f):(E,X)\rightarrow(E_0,X_0)$ stands for the
canonical vector bundle morphism (i.e. $\Pi_0\circ
\overline{f}=f\circ\Pi$):
\[
\xymatrix{
 &  & E\ar@{-}[d]\ar[dll]_{\overline{f}}\\
E_0\ar@{-}[d] & & X\ar[dll]^{f} \\
X_0 & &
}
\]
the \emph{pullback} (\emph{connection}) $f^*\nabla_0$ at $p\in X$ is given by
\[
[(f^*\nabla_0)_v V] (p)= [(\nabla_0)_{f_*v}\overline{f} V] (f(p))
\]
(the vector field $f_*v$ is well defined only locally, but since
$(\nabla_0)_\centerdot\bullet$ is tensorial on $\centerdot$ the
value of $[(\nabla_0)_{f_*v}\overline{f} V] (f(p))$ is uniquely
determined by $f_*v(p)\ $).
\end{defn}

\begin{rem}
In terms of sections of vector bundles, it follows from the
definition that $\nabla$ is equal to $f^*\nabla_0$ if and only if
the section $V$ of $\Pi$ is horizontal (i.e. $\nabla V=0$) is
equivalent to $\overline{f} V$, as a section of $\Pi_0$, is
horizontal.
\end{rem}

\subsection{Symmetric algebra of first integrals and Fano curve}

We now study symmetric products and duals of vector bundles with
connections in more detail. For the remainder of this section we
set $M=\mathscr{E}(X)$ and we fix a non-zero derivation
$v\in\mathscr{T}X(X)$.

\begin{defn}\label{deflfi}
Let $\phi\in M^*=\textrm{Hom}_k(\mathscr{E}(X),k)$. We say that
$\phi$ is a \emph{linear first integral} (of $(E,\nabla)$) if
$\phi(V)\in k$ is constant whenever $V$ is horizontal.
\end{defn}

\begin{rem}
Let $\phi$ be a linear first integral and let $V$ be horizontal.
Then
\begin{eqnarray*}
0 & = & v(\phi(V))\\
  & = & [\nabla^*_v\phi](V).
\end{eqnarray*}
Therefore, by taking a full system of solutions we see that
$\nabla^*_v\phi=0$ if and only if $\phi$ is a linear first
integral.  In particular, linear first integrals form a vector
space over the constants.
\end{rem}

\begin{defn}\label{symalg}
Denote by $\textrm{S}^d_k(M)$ the $\mathbb{C}$-vector space of
linear first integrals of the $d$-th symmetric product of
$(E,\nabla)$. As a convention we set
$\textrm{S}^0_k(M)=\mathbb{C}$. We define the graded
$\mathbb{C}$-algebra of linear first integrals of $M$ as
\[
\textrm{S}_k(M)=\bigoplus_{d\ge 0}\textrm{S}^d_k(M).
\]
\end{defn}

\begin{rem}
If $V\in M$ is horizontal, then so is $V^d$ in the $d$-th
symmetric power. Thus, given $\phi\in\textrm{S}^d_k(M)$,
$\phi(V^d)$ is constant. The elements in $\textrm{S}^d_k(M)$ are
called \emph{($d$-th order) first integrals} of $M$. Once we fix a
basis of $M^*$, i.e. a coordinate system for $M$, an element in
$\textrm{S}^d_k(M)$ is given by a homogeneous polynomial of order
$d$ in the coordinates on $M$. With this in mind $\textrm{S}_k(M)$
corresponds to the collection of rational $\mathbb{C}$-valued
functions over $E$ which contain horizontal sections of
$\mathscr{E}$ within their level sets.
\end{rem}

\begin{rem}
If we pick a $V\in M$ we obtain a homomorphism
\[
V: \textrm{S}_k(M)\longrightarrow k
\]
of $\mathbb{C}$-algebras by evaluating each first integral in
$\textrm{S}^d_k(M)$ at $V^d$.
\end{rem}

\begin{rem}\label{evV}
Let $U\subseteq X$ be an open set and pick $V\in\mathscr{E}(U)$,
then we can evaluate first integrals in $\textrm{S}^d_k(M)$ at $V$
to obtain an element of $\mathscr{M}(U)$. Indeed, since the first
integrals in $\textrm{S}^d_k(M)$ are globally defined meromorphic
functions, we can restrict them to $\Pi^{-1}U$ and evaluate them at
$V^d$. So in this case $V$ can be identified with a homomorphism
\[
V: \textrm{S}_k(M)\longrightarrow \mathscr{M}(U)
\]
of $\mathbb{C}$-algebras. Note that the previous remark is the particular case $U=X$.
\end{rem}

\begin{rem}\label{ftode}
The fundamental theorem of ordinary differential equations
guarantees that if $p$ is not a singular point of $\nabla$, then
for a sufficiently small open set $U\subseteq X$ containing $p$ we
can find a frame $(V_1,\ldots, V_n)$ of $\mathscr{E}(U)$ composed
of horizontal elements.
\end{rem}

\begin{defn}\label{fanosymalg}
Let $H$ be an invertible element of the differential $k$-module
\[
\textrm{Hom}_k(\mathscr{E}(U),\mathscr{E}(U))\simeq [\mathscr{E}^*\otimes_\mathscr{M}\mathscr{E}](U)
 \]
associating to a global frame of $M=\mathscr{E}(X)$, restricted to
$U$, a frame of $\mathscr{E}(U)$ composed of horizontal sections
(fixing a basis for $M^*$, $H$ corresponds to the matrix of
coordinates of a full system of solutions). The \emph{Fano curve}
of $(E,\nabla)$ is defined as the $\mathscr{M}(U)$-valued point
\[
H: \textrm{S}_k(\textrm{Hom}_k(\mathscr{E}(X),\mathscr{E}(X))) \longrightarrow \mathscr{M}(U)
\]
(see Remark \ref{evV}). If
$\textrm{S}_k(\textrm{Hom}_k(\mathscr{E}(X),\mathscr{E}(X)))$ is
finitely generated we define the \emph{projective Fano curve}
$X_0$ to be the non-singular model of the projective variety
defined by the maximal homogeneous ideal contained in the kernel
of the Fano curve (i.e. the maximal homogeneous ideal contained in
the kernel of $H$).
\end{defn}

\begin{rem}
To obtain the polynomials defining the projective Fano curve we
rely on the algorithm by M. van Hoeij and J.-A. Weil ~\cite{We}.
Using their terminology these polynomials correspond to the
homogeneous ``invariants'' with vanishing ``dual first
integrals''.
\end{rem}

\begin{rem}\label{fanowelldef}
There are many aspects of this definition that require elaboration.
\begin{itemize}
\item[$\bullet$] The Fano curve is $k$-valued: this is a
consequence of the Galois correspondence. This fact, together with
the ideal defining the Fano curve will be studied in the next
section (Proposition \ref{fanoideal}).
\item[$\bullet$] The Fano
curve is independent of the choice of $H$ up to isomorphism: if
$\tilde{H}$ is another invertible element of
$\textrm{Hom}_k(\mathscr{E}(U),\mathscr{E}(U))$ associating to a
global frame of $M=\mathscr{E}(X)$ a frame of $\mathscr{E}(U)$
composed of horizontal sections, then it differs from $H$ by an
element of $GL_n(\mathbb{C})$ multiplying on the right and an
element of $GL_n(k)$ multiplying on the left, which one can see
them as acting on
$\textrm{S}_k(\textrm{Hom}_k(\mathscr{E}(U),\mathscr{E}(U)))$
sending one Fano curve to another.
\item[$\bullet$] The Fano curve
is also independent of the choice of $U$ up to isomorphism: let
$\tilde{U}$ be another open set where one can define a frame
composed of horizontal sections, then by taking a path from $U$ to
$\tilde{U}$ we can prolong the frame holomorphically to a frame
over $\tilde{U}$. Because the first integrals are constant over
horizontal sections and are globally defined, the prolongation
does not change the $k$-valued point $H$ (cf. first $\bullet$ in
this Remark).
\end{itemize}
\end{rem}

\begin{rem}
Let us see how we can obtain geometrically the projective Fano
curve under the assumption that the Galois group $G$ of the
equation is reductive. Fixing a basis for $M$ let $H$ be
represented over the open set $U\subseteq X$ by the full-system of
solutions $(y^i_j)$, i.e. $y^i_j\in\mathscr{M}(U)$ for
$i,j\in\{1,\ldots,n\}$. The image of the analytic map
\begin{eqnarray*}
U & \longrightarrow & GL_n(\mathbb{C})\\
p & \longmapsto       & (y^i_j(p))
\end{eqnarray*}
corresponds to a solution curve in the phase portrait. Composing
this map with the canonical projection
$GL_n(\mathbb{C})\rightarrow GL_n(\mathbb{C})/G$, we obtain an
algebraic map
\begin{eqnarray*}
U & \longrightarrow & GL_n(\mathbb{C})/G\\
p & \longmapsto       & (y^i_j(p))\cdot G
\end{eqnarray*}
Therefore the Zariski closure of the image is an algebraic curve.
The projective Fano curve corresponds to the projective variety
defined by the cone over this algebraic curve. We will establish
the invariance of the Fano curve under ``projective equivalence''
(Proposition \ref{propalg1}).
\end{rem}

\subsection{Projective equivalence}

Assume now that $P:L\rightarrow X$ is a $1$-dimensional vector
bundle (a line bundle), and fix a global non-zero meromorphic
section $s\in\mathscr{L}(X)$. We can then identify $\mathscr{E}$
with $\mathscr{L}\otimes\mathscr{E}$ through the morphism
$V\mapsto s\otimes V$. It must be noted that this identification
is not unique, since it depends on the choice of $s$.

\begin{defn}\label{defnprojequiv}
Given another meromorphic connection $\nabla'$ on
$\Pi:E\rightarrow X$, we say that $\nabla$ and $\nabla'$ are
\emph{projectively equivalent} if there exist a $1$-dimensional
meromorphic vector bundle $P:L\rightarrow X$ with a connection
$\nabla_1$ such that
\[
\nabla'\simeq\nabla_1\otimes\nabla.
\]
\end{defn}

\begin{rem}
Assume that $s\otimes\cdot: V\mapsto s\otimes V$ is a horizontal
morphism from $(\mathscr{E},\nabla')$ to
$(\mathscr{L}\otimes\mathscr{E},\nabla_1\otimes\nabla)$, then
\[
s\otimes \nabla'(V)=\nabla_1 s\otimes V+s\otimes\nabla V.
\]
\end{rem}

\begin{prop}
Projective equivalence is an equivalence relation on the
collection of connections over $\mathscr{E}$.
\end{prop}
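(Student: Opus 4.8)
The plan is to verify reflexivity, symmetry and transitivity, using throughout that the tensor product, the dual, and the canonical unit and associativity isomorphisms all carry the induced connection structures (as recorded after the Proposition on differential modules and after the Proposition identifying connections with $(k,v)$-modules), and that an isomorphism of bundles realizes $\simeq$ exactly when it is \emph{horizontal}, i.e.\ intertwines the connections in the sense of item $v)$ of the Proposition. I will also use that tensoring a horizontal isomorphism with a horizontal morphism again yields a horizontal map, so that $\simeq$ is preserved under tensoring.

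For reflexivity I take for $(L,\nabla_1)$ the trivial line bundle with its flat connection, whose unit section $s$ is horizontal ($\nabla_1 s=0$). The canonical isomorphism $\mathscr{L}\otimes\mathscr{E}\simeq\mathscr{E}$, $s\otimes V\mapsto V$, is horizontal because
\[
(\nabla_1\otimes\nabla)(s\otimes V)=\nabla_1 s\otimes V+s\otimes\nabla V=s\otimes\nabla V,
\]
so $\nabla\simeq\nabla_1\otimes\nabla$ and $\nabla$ is projectively equivalent to itself.

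For symmetry, suppose $\nabla'\simeq\nabla_1\otimes\nabla$ for a line bundle $(L,\nabla_1)$. I tensor this isomorphism on the left with the dual line bundle $(L^*,\nabla_1^*)$ to get $\nabla_1^*\otimes\nabla'\simeq\nabla_1^*\otimes\nabla_1\otimes\nabla$. The crux is that the evaluation isomorphism of $L^*\otimes L$ onto the trivial line bundle is horizontal: identifying $\mathscr{L}^*\otimes\mathscr{L}$ with $\textrm{Hom}_k(\mathscr{L}(X),\mathscr{L}(X))$, a direct Leibniz computation (the converse of the implication in item $v)$ of the Proposition) shows that $\textrm{id}_L$ is horizontal for $\nabla_1^*\otimes\nabla_1$, so under this identification $\nabla_1^*\otimes\nabla_1$ is the trivial connection. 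Composing with the unit isomorphism of the reflexivity step gives $\nabla_1^*\otimes\nabla_1\otimes\nabla\simeq\nabla$, whence $\nabla\simeq\nabla_1^*\otimes\nabla'$; since $L^*$ is again a line bundle, $\nabla'$ is projectively equivalent to $\nabla$.

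For transitivity, assume $\nabla'\simeq\nabla_1\otimes\nabla$ and $\nabla''\simeq\nabla_2\otimes\nabla'$ with line bundles $(L_1,\nabla_1)$ and $(L_2,\nabla_2)$. Substituting the first isomorphism into the second and applying the associativity isomorphism $L_2\otimes(L_1\otimes\mathscr{E})\simeq(L_2\otimes L_1)\otimes\mathscr{E}$ (horizontal for the induced connections, again by the Leibniz rule) yields $\nabla''\simeq(\nabla_2\otimes\nabla_1)\otimes\nabla$. Since $L_2\otimes L_1$ is a line bundle carrying the connection $\nabla_2\otimes\nabla_1$, this exhibits $\nabla$ and $\nabla''$ as projectively equivalent, completing the proof. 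The only genuine work—and the step I expect to be the main obstacle—is verifying the horizontality of these canonical algebraic isomorphisms, above all the evaluation $L^*\otimes L\simeq$ (trivial bundle) used in the symmetry step; but each reduces to a single Leibniz-rule computation of the kind already performed in the Proposition.
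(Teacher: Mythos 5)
Your proposal is correct and follows essentially the same route as the paper: the core step is symmetry, handled exactly as in the paper by tensoring with the dual line bundle and showing the induced connection on $\mathscr{L}^*\otimes\mathscr{L}$ is trivial (your appeal to horizontality of $\textrm{id}_L$ under the Hom identification is the same Leibniz computation the paper performs explicitly on $s^1\otimes s_1$). The only difference is that you spell out reflexivity and transitivity, which the paper dismisses as immediate.
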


\begin{proof}
Reflexivity and Transitivity is immediate. We prove symmetry. Note that since $(P,\nabla_1): L\rightarrow X$ is $1$-dimensional,
the same holds for the dual $(L^*,P^*,\nabla_1^*)$. Under the
canonical isomorphisms we have
\[
\mathscr{L}^*\otimes\mathscr{L}\simeq \textrm{Hom}_{\mathscr{M}}(\mathscr{L},\mathscr{L})\simeq \mathscr{M}.
\]
Moreover in terms of the basis $s=s_1$ of $\mathscr{L}(X)$, if
$s^1\in\mathscr{L}^*$ is such that $s^1(s_1)=1$, then
\begin{eqnarray*}
\{[\nabla_1^*\otimes\nabla_1]_v(s^1\otimes s_1)\}(s_1) & = & \{\nabla_{1v}^*s^1\otimes s_1+s^1\otimes \nabla_{1v} s_1\}(s_1) \\
 & = & \{\nabla_{1v}^*s^1\}(s_1)s_1+s^1(s_1)\nabla_{1v}s_1\\
 & = & \{v(s^1(s_1))-s^1(\nabla_{1v}s_1)\}s_1+\nabla_{1v}s_1\\
 & = & \nabla_{1v}s_1-s^1(\nabla_{1v}s_1)s_1 = 0.
\end{eqnarray*}
So the connection on $\mathscr{L}^*\otimes\mathscr{L}$ is trivial,
and if $\nabla'\simeq\nabla_1\otimes\nabla$ then
\[
\nabla_1^*\otimes\nabla' \simeq \nabla_1^*\otimes\nabla_1 \otimes \nabla\simeq\nabla.
\]
We conclude that projective equivalence is an equivalence
relation.
\end{proof}

\subsection{The geometric Galois group}\label{galgr}

\begin{rem}
Let $\phi\otimes V\in
[\mathscr{E}^*\otimes_\mathscr{M}\mathscr{E}](X)$, so that:
\begin{eqnarray*}
[\nabla^*\otimes\nabla](\phi\otimes V) & = & \nabla^*\phi\otimes V+\phi\otimes\nabla V.
\end{eqnarray*}
Then under the canonical isomorphism
$\textrm{Hom}_k(\mathscr{E}(X),\mathscr{E}(X))\simeq
[\mathscr{E}^*\otimes_\mathscr{M}\mathscr{E}](X)$ we obtain:
\begin{eqnarray*}
[\nabla^*\otimes\nabla]_v(\phi\otimes V)\ (W)& = & [\nabla^*_v\phi](W) V+\phi(W)\nabla_v V\\
  & = & \{v(\phi(W))-\phi(\nabla_v W)\}V+\phi(W)\nabla_v V\\
  & = & \nabla_v[\phi(W)V]-\phi(\nabla_v W)V\\
  & = & \nabla_v[(\phi\otimes V)\ (W)]-(\phi\otimes V)\ (\nabla_v W).
\end{eqnarray*}
This implies that $\psi\in
\textrm{Hom}_k(\mathscr{E}(X),\mathscr{E}(X))$ is horizontal if
and only if it is a connection preserving map, i.e.
\[
\nabla[\psi(W)]=\psi(\nabla W)
\]
(in terms of differential $k$-modules, $H$ is horizontal if and only if $H$ is a morphism of differential modules).
\end{rem}

Fix a horizontal automorphism $\psi\in
\textrm{Hom}_k(\mathscr{E}(X),\mathscr{E}(X))$ (e.g. $\psi$ is the
identity). The collection of horizontal automorphisms $\phi\in
\textrm{Hom}_k(\mathscr{E}(X),\mathscr{E}(X))$ defining the same
map
\[
\phi: \textrm{S}_k(\textrm{Hom}_k(\mathscr{E}(X),\mathscr{E}(X))) \longrightarrow \mathbb{C}
\]
as $\psi$ will be denoted by $[\psi]$. In particular, the elements
in $\psi^{-1}[\psi]$ form a group, which we will call the
\emph{geometric Galois group} of $\nabla$. Note that $\phi$ is
$\mathbb{C}$-valued because first integrals are constant on
horizontal elements.

\begin{rem}\label{remgeogal}
As in the case of the Fano curve, it follows that the geometric
Galois group is independent of $\psi$ and $U$, up to isomorphism.
More generally, we could replace $\psi$ by any (non-necessarily
horizontal) automorphism in
$\textrm{Hom}_k(\mathscr{E}(U),\mathscr{E}(U))$. Indeed, in such
case we would obtain a conjugate of $G$ in $GL_n(\mathscr{M}(U))$.
\end{rem}

\begin{rem}
The geometric Galois group measures the horizontal automorphisms
that cannot be distinguished one from another by means of first
integrals.
\end{rem}

\section{Algebraic Interpretation}

To give an algebraic interpretation of the geometric objects
introduced above, we require a global meromorphic frame
$F=(e_1,\ldots,e_n)\in\mathscr{E}(X)^n$ of $E\rightarrow X$ so
that we can do some computations using coordinates.

As above, we fix a derivation $v\in \mathscr{T}X(X)$. We discussed
earlier that the solutions to the equation $\nabla_vV=0$, are
given by the solutions to the matrix differential equation
\begin{eqnarray}\label{eq1}
v(f^i)=a^i_jf^j\quad i\in\{1,\ldots,n\},
\end{eqnarray}
where $f^i=e^i(V)$ and $\nabla_ve_i=-a^j_ie_j$.

\subsection{Picard-Vessiot Extensions and Galois groups}

Let $H=(V_1,\ldots,V_n)$ be a frame of $\mathscr{E}(U)$ composed
of horizontal elements. If we denote by
$y^i_j=e^i(V_j)\in\mathscr{M}(U)$ the coordinates of these
horizontal elements in our original frame $F$, then
\[
v(y^i_j)=a^i_ky^k_j
\]
and the Picard-Vessiot extension is given by the subfield
\[
K:=k(y^i_j)
\]
of $\mathscr{M}(U)$. The inclusion map into this extension is
given by the restriction map:
\begin{eqnarray*}
k\simeq\mathscr{M}(X) & \longrightarrow & \mathscr{M}(U) \\
                          f & \longmapsto     & f\!\!\upharpoonright_U.
\end{eqnarray*}
Formally, the Picard-Vessiot extensions can be obtained as follows
(a rigorous exposition may be found in \cite{SvdP}). Consider the
ring of polynomials in $n\times n$ variables with coefficients in
$k$,
\[
k[X^i_j,\frac{1}{\det}]_{i,j\in\{1,\ldots,n\}},
\]
inverting the determinant polynomial $\det:=\det(X^i_j)$. We turn
this ring into a differential ring extension of $(k,v)$ by setting
\[ v(X^i_j)=a^i_lX^l_j
\]
and using the Leibniz rule and the quotient rule
\[
v(ab^{-1})=[v(a)b-av(b)]b^{-2}
\]
we extend the derivation to the whole ring. An ideal $I\subseteq
k[X^i_j,\frac{1}{\det}]$ is differential if it is closed under
derivation, i.e. $v(I)\subset I$. Maximal differential ideals are
prime. We obtain a Picard-Vessiot extension for $\nabla$ by taking
the fraction field of the quotient of $k[X^i_j,\frac{1}{\det}]$ by
a maximal differential ideal $I$.

Note that we can make $\textrm{GL}_n(\mathbb{C})$ act on
$k[X^i_j,\frac{1}{det}]$ by differential automorphisms over $k$ by
setting for $(g^i_j)\in \textrm{GL}_n(\mathbb{C})$
\begin{eqnarray*}
(g^i_j): k[X^i_j,\frac{1}{\det}] & \longrightarrow & k[X^i_j,\frac{1}{\det}]\\
X^i_j   & \longmapsto & X^i_lg^l_j
\end{eqnarray*}
We can identify the Galois group $G$ with the elements of
$\textrm{GL}_n(\mathbb{C})$ sending $I$ to itself. In particular,
we may take $I$ as the kernel of the evaluation map of
$k$-algebras:
\begin{eqnarray*}
\Psi: k[X^i_j,\frac{1}{\det}] & \longrightarrow & K\subseteq\mathscr{M}(U) \\
            X^i_j      & \longmapsto     & y^i_j
\end{eqnarray*}

Let us consider with more care the relationship between the
geometric Galois group and the (algebraic) Galois group. As above,
$H$ will denote the invertible element $e^i\otimes
V_i\in\textrm{Hom}_k(\mathscr{E}(U),\mathscr{E}(U))$. An element
of $S^d_k(\textrm{Hom}_k(\mathscr{E}(X),\mathscr{E}(X)))$
corresponds on the frame $F$ to a homogeneous polynomial
$P(X^i_j)$ of degree $d$ such that $P(y^i_j)=f$, where $f\in k$.
So $P(X^i_j)-f$ is in the kernel $I$ of the evaluation map $\Psi$.
Moreover, if $g=(g^i_j)\in \textrm{GL}_n(\mathbb{C})$ is in our
algebraic definition of the Galois group, then $P(X^i_lg^l_j)-f$
is again in $I$, meaning that $P(y^i_lg^l_j)=f$, so the geometric
Galois group contains the (algebraic) Galois group (cf. Remark
\ref{remgeogal}).

\begin{rem}\label{thmcompoint2}
In \cite[Theorem 4.2]{compoint} Compoint proves that there is a bijective correspondence between first integrals in $S^d_k(\textrm{Hom}_k(\mathscr{E}(X),\mathscr{E}(X)))$ and homogeneous elements of $\mathbb{C}[X^i_j]^G$ of degree $d$. Furthermore, the theorem also states (see Theorem \ref{compthm} below) that when the (algebraic) Galois group $G$ is reductive and unimodular, the elements of $I$ of the form $P(X^i_j)-f$, where $P(X^i_j)\in\mathbb{C}[X^i_j]^G$ and $P(y^i_j)=f$, generate $I$, so in such case the geometric and the (algebraic) Galois group coincide. In view of Compoint's result we will restrict ourselves to such case.
\end{rem}

\subsection{Compoint's Theorem and the projective Fano Curve}

In analogy with classical Galois theory, differential Galois
theory also admits a Galois correspondence. In particular, the
fixed field $K^G$ is the ground field $k$. Furthermore using the
action in the previous section, if $P(X^i_j)\in k[X^i_j]$ is
invariant under the action of $G$, then the Galois correspondence
implies $P(y^i_j)\in k$.

\begin{rem}
For the rest of this section we will assume that $G$ is unimodular
and reductive. In particular this will imply that
$\textrm{S}_k(\textrm{Hom}_k(\mathscr{E}(X),\mathscr{E}(X)))$ is
finitely generated, and so we will be able to associate to
$\nabla$ a projective Fano curve $X_0$. We now introduce a theorem
that allows us to effectively compute $X_0$.
\end{rem}

\begin{thm}[Compoint \cite{compoint}]\label{compthm}
If $G$ is reductive and unimodular, then $I$ is generated by the
$G$-invariants it contains. Moreover, if $P_0,\ldots, P_r$ is a
set of generators for the $\mathbb{C}$-algebra of $G$-invariants
in $\mathbb{C}[X^i_j]$, and if $f_0,\ldots, f_r\in k$ are such
that $P_i-f_i\in I$, then $I$ is generated over
$k[X^i_j,\frac{1}{det}]$ by $P_i-f_i$, $i\in\{0,\ldots, r\}$.
\end{thm}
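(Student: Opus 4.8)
The plan is to exploit reductivity through the Reynolds operator and to reduce the statement to a reducedness claim about a fiber of the quotient morphism. Write $R=k[X^i_j,\frac{1}{\det}]$. Since $G$ is reductive and we work in characteristic zero, there is a Reynolds operator $\mathcal{R}\colon R\to R^G$, an $R^G$-linear projection onto the invariants. First I would check that $\mathcal{R}$ commutes with $v$: the derivation acts by $v(X^i_j)=a^i_lX^l_j$ while the group acts on the right by $X^i_j\mapsto X^i_mg^m_j$, so $v$ commutes with the $G$-action and hence with averaging. Set $I^G:=I\cap R^G$ and let $J$ be the ideal of $R$ generated by $I^G$. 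Because $G$ is defined as the stabilizer of $I$, the ideal $I$ is a rational $G$-submodule of $R$; applying $\mathcal{R}$ gives $\mathcal{R}(I)\subseteq I^G$, and since $v$ commutes with $\mathcal{R}$ both $I^G$ and $J$ are differential ideals with $J\subseteq I$.

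Next I would dispose of the ``Moreover'' part by identifying $I^G$. Invariants of a reductive group in characteristic zero form an exact functor, so $(R/I)^G=R^G/I^G$; on the other hand $(R/I)^G=A^G=k$ by the Galois correspondence, where $A=R/I$ is the Picard--Vessiot ring. Hence the evaluation $\Psi\colon R^G\to k$ is surjective with kernel exactly $I^G$, which says precisely that each invariant $P$ satisfies $P-\Psi(P)\in I$ with $\Psi(P)\in k$. Because unimodularity makes $\det$ a $G$-invariant, localization commutes with taking invariants and $R^G=k\otimes_{\mathbb{C}}\mathbb{C}[X^i_j,\frac{1}{\det}]^G$ is a finitely generated $k$-algebra; consequently the finitely many relations $P_i-f_i$ generate $I^G$ as an ideal of $R^G$ (the $P_i$ generate the invariant algebra and $\det$ is itself invariant), and the ``Moreover'' clause follows once we know $J=I$.

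It thus remains to prove the first assertion $J=I$, equivalently that the natural surjection $R/J=R\otimes_{R^G}k\longrightarrow R/I=A$ is an isomorphism. Geometrically, $J$ cuts out the scheme-theoretic fiber of the quotient $\pi\colon GL_n\to GL_n/\!/G=\mathrm{Spec}\,R^G$ over the $k$-point determined by $\Psi$, while $A$ is the coordinate ring of the Picard--Vessiot torsor, a single closed $G$-orbit contained in that fiber and of dimension $\dim G$, equal to the dimension of the generic fiber of $\pi$. Since for reductive $G$ each fiber of $\pi$ contains a unique closed orbit, and any point lying in a strictly larger orbit would force a component of dimension exceeding $\dim G$, a dimension count shows the fiber and the torsor coincide as reduced schemes; thus $V(J)=V(I)$.

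The hard part is the passage from this set-theoretic equality to the scheme-theoretic one, that is, the reducedness of $R\otimes_{R^G}k$. Once reducedness is established, $J$ is radical, and since $I$ is prime (a maximal differential ideal) with $V(J)=V(I)$ we obtain $J=\sqrt{J}=\sqrt{I}=I$, completing the proof. I would isolate the reducedness as a standalone lemma in commutative algebra, and this is exactly where unimodularity is indispensable: reducing to the invariant setting via $\det$ lets one work in the polynomial ring $\mathbb{C}[X^i_j]$, where $\mathbb{C}[X^i_j]^G$ is Cohen--Macaulay by Hochster--Roberts, and the additional structure coming from $G\subseteq SL_n$ is what forces the fibers of $\pi$ to be reduced. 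Establishing this flatness and reducedness of the quotient morphism is the step I expect to be the main obstacle, since it requires the full strength of both hypotheses rather than reductivity alone.
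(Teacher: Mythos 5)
First, a point about the comparison you asked for: the paper contains no proof of this statement at all. Theorem \ref{compthm} is imported verbatim from Compoint's article \cite{compoint} (the bracketed attribution is the citation), and the paper proceeds to use it as a black box. So your attempt can only be judged on its own merits, and there it falls short in one essential place.

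The formal reductions you make are correct: the derivation commutes with the right $G$-action, hence with the Reynolds operator; exactness of invariants for reductive groups plus the Galois correspondence $(R/I)^G=k$ (where $R=k[X^i_j,\frac{1}{\det}]$) identifies $\ker(\Psi\upharpoonright_{R^G})$ with $I^G=I\cap R^G$; and unimodularity makes $\det$ invariant, so $R^G=k[P_0,\ldots,P_r][\frac{1}{\det}]$ and the ``Moreover'' clause follows once one knows $I=I^G R=:J$. But that last equality \emph{is} the theorem, and your argument for it is not a proof: you give a set-theoretic identification $V(J)=V(I)$ (which is essentially right, though the clean reason is that all $G$-orbits on $GL_n$ are cosets, hence closed, so each fiber of the quotient is a single orbit --- not a dimension count against ``strictly larger orbits,'' which do not exist here), and then you explicitly defer the scheme-theoretic statement, the reducedness of $R\otimes_{R^G}k$, to ``a standalone lemma'' that you yourself call the main obstacle. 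That lemma is not a technical remainder; it is the entire content of Compoint's theorem, everything surrounding it being formal. Moreover, the tool you gesture at (Cohen--Macaulayness of $\mathbb{C}[X^i_j]^G$ via Hochster--Roberts) does not yield reducedness of fibers. The gap can in fact be closed, but by a different mechanism: by Kolchin's torsor theorem $\textrm{Spec}(R/I)$ is a $G_k$-torsor; by Matsushima's criterion, reductivity of $G$ makes the homogeneous space $GL_n/G$ affine, so it coincides with $\textrm{Spec}\,\mathbb{C}[GL_n]^G$ and $GL_n\rightarrow GL_n/G$ is a faithfully flat $G$-torsor; base-changing to $k$, the fiber $\textrm{Spec}(R/J)$ over the $k$-point $\Psi$ is then itself a $G_k$-torsor, and the $G$-equivariant closed immersion $\textrm{Spec}(R/I)\hookrightarrow\textrm{Spec}(R/J)$ of one torsor into another is forced to be an isomorphism, i.e. $J=I$. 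Note this also corrects your diagnosis of the hypotheses: reducedness of the fiber comes from reductivity alone, while unimodularity ($G\subseteq SL_n$) is needed only so that the polynomial invariants $P_i$ together with $\frac{1}{\det}$ generate $R^G$, i.e. for the ``Moreover'' clause as stated.
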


\begin{rem}\label{rem1}
Compoint's theorem says that $I$ is uniquely determined by the
restriction of $\Psi: k[X^i_j,\frac{1}{det}]\rightarrow K$ to
$\mathbb{C}[X^i_j]^G\rightarrow k$.
\[
\xymatrix{
 &  & K\ar@{-}[d]\ar@{-}[dll]\\
\mathbb{C}[X^i_j]\ar@{-}[d] & & k\ar@{-}[dll] \\
\mathbb{C}[X^i_j]^G & &
}
\]
\end{rem}

\begin{prop}\label{fanoideal}
We keep the notation and hypotheses of the theorem and the remark.
Let $J$ be the maximal homogeneous ideal contained in the kernel
of $\Psi\upharpoonright_{\mathbb{C}[X^i_j]^G}$. The projective
variety $Z(J)\subseteq\textrm{Proj}\ (\mathbb{C}[X^i_j]^G)$
defined by the homogeneous ideal $J$, is bi-rationally equivalent to the
projective Fano curve $X_0$ of $\nabla$. Moreover, we have
\[
\mathbb{C}(X_0)=\mathbb{C}(\frac{f_i^{m_i}}{f_j^{m_j}}|m_in_i=m_jn_j,\ f_j\ne 0).
\]
\end{prop}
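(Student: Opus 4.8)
The plan is to reduce the statement to a computation with graded rings, using Compoint's identification of first integrals with invariants as the bridge. First I would invoke Remark \ref{thmcompoint2} (that is, \cite[Theorem 4.2]{compoint}) to identify the graded $\mathbb{C}$-algebra $\textrm{S}_k(\textrm{Hom}_k(\mathscr{E}(X),\mathscr{E}(X)))$ of first integrals with the invariant ring $\mathbb{C}[X^i_j]^G$, in a manner compatible with evaluation: the Fano map $H$ sends the first integral attached to a homogeneous invariant $P$ to its value $P(y^i_j)=\Psi(P)\in k$ on the full system of solutions. Under this identification the Fano map $H$ is therefore nothing but the restriction $\Psi\upharpoonright_{\mathbb{C}[X^i_j]^G}$, and the maximal homogeneous ideal contained in $\ker H$ is exactly $J$. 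Since $X_0$ was \emph{defined} to be the non-singular model of the projective variety cut out by that ideal, $X_0$ is by construction the non-singular model of $Z(J)$; as a non-singular model is bi-rational to the variety it resolves, this already yields the first assertion.

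For the function field I would first record that $J$ is prime: it is the largest homogeneous ideal contained in the prime ideal $\ker(\Psi\upharpoonright_{\mathbb{C}[X^i_j]^G})$ (prime because $k$ is a domain), and the homogeneous part of a prime ideal is again prime. Hence $Z(J)$ is irreducible and $\mathbb{C}(X_0)=\mathbb{C}(Z(J))$, the two agreeing because bi-rational varieties share a function field. Writing $R=\mathbb{C}[X^i_j]^G$ with homogeneous generators $P_0,\dots,P_r$ of degrees $n_0,\dots,n_r$, the map $\Psi$ carries $R/J$ isomorphically onto the graded domain $S=\mathbb{C}[f_0,\dots,f_r]\subseteq k$ with $\deg f_i=n_i$, where $f_i=\Psi(P_i)$. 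The function field of $Z(J)=\textrm{Proj}(S)$ is then the degree-zero part of the homogeneous localization, i.e. the field generated by ratios $a/b$ of homogeneous elements of equal weighted degree.

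It remains to check that this degree-zero part is generated by the elementary ratios $f_i^{m_i}/f_j^{m_j}$ with $m_in_i=m_jn_j$. Fixing a single degree-$d$ monomial in the $f_i$ as a common denominator, every homogeneous element of degree $d$ becomes, after division, a $\mathbb{C}$-combination of degree-zero Laurent monomials $\prod_i f_i^{c_i}$ with $\sum_i c_in_i=0$, so it suffices to show each such Laurent monomial lies in $\mathbb{C}\bigl(f_i^{m_i}/f_j^{m_j}\bigr)$. This is the purely arithmetic statement that the kernel of the homomorphism $\mathbb{Z}^{r+1}\to\mathbb{Z}$, $(c_i)\mapsto\sum_i c_in_i$, is generated by the vectors $(n_j/d_{ij})\,e_i-(n_i/d_{ij})\,e_j$ with $d_{ij}=\gcd(n_i,n_j)$, which correspond precisely to the generators $f_i^{m_i}/f_j^{m_j}$ in the statement. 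I expect this last syzygy computation, together with the bookkeeping of the weighted grading (and the verification that $J$ is prime), to be the only genuine obstacles; the geometric content is already furnished by Compoint's theorem and by the definition of the non-singular model.
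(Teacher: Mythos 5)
Your proposal is correct and follows essentially the same route as the paper: Compoint's correspondence (Remark \ref{thmcompoint2}) identifies the maximal homogeneous ideal contained in the kernel of the Fano map $H$ with $J$, which gives the bi-rational statement, and the function field is then read off as the degree-zero part of the graded fraction field of $\mathbb{C}[f_0,\ldots,f_r]$. If anything, your handling of the second step is more careful than the paper's: where the paper simply asserts that the coordinate ring of the chart $U_{P_l}$ is generated by the pure ratios $\frac{P_i^{m_i}}{P_l^{m_l}}$ and takes quotient fields, you correctly isolate (and correctly state) the needed arithmetic fact that the relation lattice $\ker(\mathbb{Z}^{r+1}\to\mathbb{Z},\ (c_i)\mapsto\sum_i c_i n_i)$ is generated by the pairwise vectors, together with the primeness of $J$, both of which are true and routine to verify.
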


\begin{proof} Let $P(X^i_j)\in J$ of degree $d$, then $P(X^i_j)$ corresponds, in
our frame $F$,  to a first integral of degree $d$ (see Remark \ref{thmcompoint2}), i.e. a homogeneous element
$\lambda\in\textrm{S}_k(\textrm{Hom}_k(\mathscr{E}(X),\mathscr{E}(X)))$,
vanishing at $H=(V_1,\ldots,V_n)=e^i\otimes V_i$ (being pedantic,
it vanishes at $H^d$). This means that $\lambda$ is in the
homogeneous ideal defining the projective Fano curve. Conversely
\cite[Theorem 4.2]{compoint} says that such a $\lambda$, i.e. a
first integral of degree $d$ vanishing at $H$, corresponds to a
homogeneous polynomial invariant under the $G$-action of degree
$d$, $P(X^i_j)\in \mathbb{C}[X^i_j]$, such that $P(y^i_j)=0$, i.e.
$P(X^i_j)\in J$.

Fix $P_l\in\{P_0,\ldots,P_r\}$ such that
$f_l=\psi(P_l)=P_l(y^i_j)\ne 0$, let $U_{P_l}=\{P_l\ne
0\}\subseteq\textrm{Proj}\ (\mathbb{C}[X^i_j]^G)$. The ring of
coordinate functions of $U_{P_l}$ is
\[
\mathbb{C}[U_{P_l}]=\mathbb{C}[\frac{P_i^{m_i}}{P_l^{m_l}}|m_in_i=m_ln_l],
\]
So
\[
\mathbb{C}[U_{P_l}\cap X_0]=\mathbb{C}[\frac{f_i^{m_i}}{f_l^{m_l}}|m_in_i=m_ln_l],
\]
The statement follows after taking the quotient field.\qed
\end{proof}

\begin{lem}\label{lem0}\label{covgr}
We keep the notation and hypotheses of Compoint's theorem. Let
$\textrm{pr}: SL_n(\mathbb{C})\rightarrow PSL_n(\mathbb{C})$ and
set
\[
\widetilde{G}:=\textrm{pr}^{-1}(PG),
\]
where $PG=G/\mu_n$ ($\mu_n=Z(SL_n(\mathbb{C}))$). If $P(y^i_j)=1$
for some homogeneous $P(X^i_j)\in\mathbb{C}[X^i_j]^G$ of degree
$n$, then $\mathbb{C}(X_0)=\mathbb{C}(y^i_j)^{\widetilde{G}}$.
\end{lem}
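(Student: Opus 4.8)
The plan is to compute both fields as images, under the evaluation $\Psi\colon k[X^i_j,\tfrac{1}{\det}]\to K$, $X^i_j\mapsto y^i_j$, of explicit subfields of $\mathbb{C}(X^i_j)$ cut out by homogeneity conditions, and then to bridge the two conditions using the hypothesis $P(y^i_j)=1$. Throughout I write $S:=\mathbb{C}[X^i_j]^G$, a graded ring generated by $P_0,\dots,P_r$ with $\deg P_i=n_i$, and let $\Psi$ also denote its restriction $S\to k$, $P_i\mapsto f_i$.

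First I would identify $\mathbb{C}(X^i_j)^{\widetilde{G}}$. The right–translation action of $G\subseteq SL_n(\mathbb{C})$ on $GL_n(\mathbb{C})$ is free with geometric quotient $GL_n(\mathbb{C})/G$, so $\mathbb{C}(X^i_j)^G=\mathbb{C}(GL_n(\mathbb{C})/G)=\mathrm{Frac}(\mathbb{C}[X^i_j,\tfrac{1}{\det}]^G)$; since $G$ is unimodular, $\det\in S$ and hence $\mathbb{C}(X^i_j)^G=\mathrm{Frac}(S)$. As $\mu_n$ is central it commutes with $G$, so $\mathbb{C}(X^i_j)^{\widetilde{G}}=\mathrm{Frac}(S)^{\mu_n}$; and since a scalar $\zeta\in\mu_n$ acts on a homogeneous element of degree $d$ by $\zeta^{d}$, this invariant field is generated by the ratios $u/v$ of homogeneous $u,v\in S$ with $\deg u\equiv\deg v\pmod n$. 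Because the geometric Galois group coincides with $G$ (Remark \ref{thmcompoint2}) and $G$ is reductive, applying $\Psi$ identifies $\mathbb{C}(y^i_j)^{\widetilde{G}}$ with $\Psi(\mathrm{Frac}(S)^{\mu_n})$, i.e.\ with the subfield of $k$ generated by all $u(y^i_j)/v(y^i_j)$ with $n\mid(\deg u-\deg v)$.

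On the other side, Proposition \ref{fanoideal} gives $\mathbb{C}(X_0)=\Psi(\mathrm{Frac}(S)_0)$, the subfield generated by the $u(y^i_j)/v(y^i_j)$ with $\deg u=\deg v$ (the degree–zero part of $\mathrm{Frac}(S)$). Since an equality of degrees forces $n\mid(\deg u-\deg v)$, the inclusion $\mathbb{C}(X_0)\subseteq\mathbb{C}(y^i_j)^{\widetilde{G}}$ is immediate. The reverse inclusion is where the hypothesis enters: given homogeneous $u,v\in S$ with $\deg u-\deg v=mn$, say $m\ge 0$, the degree–$n$ invariant $P$ satisfies $P(y^i_j)=1$, so $vP^m$ is homogeneous of degree $\deg u$ with $(vP^m)(y^i_j)=v(y^i_j)$; hence $u(y^i_j)/v(y^i_j)=u(y^i_j)/(vP^m)(y^i_j)\in\Psi(\mathrm{Frac}(S)_0)=\mathbb{C}(X_0)$ (for $m<0$ I multiply the numerator by $P^{-m}$ instead). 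Thus every generator of $\mathbb{C}(y^i_j)^{\widetilde{G}}$ lies in $\mathbb{C}(X_0)$, giving the opposite inclusion and the desired equality.

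I expect the main obstacle to be the first step rather than the bridging argument: namely, making rigorous the identification $\mathbb{C}(y^i_j)^{\widetilde{G}}=\Psi\big(\mathbb{C}(X^i_j)^{\widetilde{G}}\big)$. The delicate point is that, when $\mu_n\not\subseteq G$, the scaling part of $\widetilde{G}$ need not preserve the Zariski closure of the solution curve, so $\widetilde{G}$ does not literally act on $\mathbb{C}(y^i_j)$; one must interpret $\mathbb{C}(y^i_j)^{\widetilde{G}}$ through the ambient invariants (equivalently, through the geometric Galois group), and here reductivity of $G$ is essential, since it guarantees (via the Reynolds operator applied to the $G$–stable affine closure of the solution curve) both that $\mathbb{C}(y^i_j)^G=\Psi(\mathrm{Frac}(S))$ and that $\Psi$ carries invariant rational functions onto invariant rational functions. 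Once this identification is in place, the degree bookkeeping closed off by $P(y^i_j)=1$ is routine.
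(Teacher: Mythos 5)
Your proof is correct and follows essentially the same route as the paper's: both arguments rest on the observation that $\widetilde{G}=G\cdot\mu_n$, so that $\widetilde{G}$-invariant polynomials are exactly the $G$-invariants concentrated in degrees divisible by $n$, combine this with the description of $\mathbb{C}(X_0)$ from Proposition \ref{fanoideal} as evaluations of degree-zero ratios of invariants, and use the hypothesis $P(y^i_j)=1$ to absorb any degree shift by a multiple of $n$. The only differences are presentational: the paper works with the affine coordinate ring of the chart $U_P=\{P\ne 0\}$ and passes to fraction fields at the end, whereas you work with fraction fields throughout, and your closing paragraph makes explicit (and correctly resolves, via ambient invariants and the geometric quotient) the interpretation of $\mathbb{C}(y^i_j)^{\widetilde{G}}$ that the paper's proof uses tacitly when it writes $\mathbb{C}[y^i_j]^{\widetilde{G}}$ for the image of $\mathbb{C}[X^i_j]^{\widetilde{G}}$ under evaluation.
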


\begin{proof}
As in the latter proposition we take $U_P=\{P\ne 0\}$, so
$X_0\subseteq U_P$ and
$\mathbb{C}[X_0]=\mathbb{C}[f_i^{m_i}|m_in_i=lcm(m_i,n)]$. Recall
$f_i^{m_i}=P_i^{m_i}(y^i_j)$. Now because
$n|\textrm{deg}(P_i^{m_i})$ then
$P_i^{m_i}\in\mathbb{C}[X^i_j]^{\mu_n}$, but
$P_i^{m_i}\mathbb{C}[X^i_j]^G$ so
$P_i^{m_i}\in\mathbb{C}[X^i_j]^{\widetilde{G}}$, whence
$\mathbb{C}[X_0]\subseteq \mathbb{C}[y^i_j]^{\widetilde{G}}$.
Conversely
$\mathbb{C}[X^i_j]^{\widetilde{G}}\subseteq\mathbb{C}[X^i_j]^G$,
so
$\mathbb{C}[y^i_j]^{\widetilde{G}}\subseteq\mathbb{C}[X_0]$.\qed
\end{proof}

\begin{rem}
A particular case of the previous lemma occurs when
\[
\det(y^i_j)=1.
\]
\end{rem}

\begin{lem}\label{lem1}
Under the hypotheses of Compoint's theorem, if $\det(y^i_j)=1$
there is a choice of $v\in\mathscr{T}X$ such that in (\ref{eq1})
one has
\[
(a^i_j)\in M_{n\times n}(\mathbb{C}(X_0))+\mathfrak{g}(k),
\]
where $\mathfrak{g}$ is the Lie algebra of $G$.
\end{lem}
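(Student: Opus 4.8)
The plan is to use reductivity to peel off a canonical $\mathfrak{g}$-valued summand of $A=(a^i_j)$ and to realize the remaining part as a pullback from $X_0$. Since $G$ is unimodular we have $\mathfrak{g}\subseteq\mathfrak{sl}_n$, and $\det(y^i_j)=1$ forces $A=v(Y)Y^{-1}\in\mathfrak{sl}_n(k)$, where $Y=(y^i_j)$. Reductivity makes $\mathfrak{sl}_n$ a semisimple $G$-module under the adjoint action, so it admits a $G$-invariant decomposition $\mathfrak{sl}_n=\mathfrak{g}\oplus\mathfrak{m}$. Writing $A=A_{\mathfrak g}+A_{\mathfrak m}$ accordingly, with $A_{\mathfrak g}\in\mathfrak{g}(k)$, the statement is equivalent to $A_{\mathfrak m}\in M_{n\times n}(\mathbb{C}(X_0))$, since any admissible decomposition differs from this one by an element of $\mathfrak{g}(k)$. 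By Lemma \ref{covgr} we have $\mathbb{C}(X_0)=\mathbb{C}(y^i_j)^{\widetilde G}\subseteq k$, so there is a finite morphism $f:X\to X_0$. I would then fix $v$ to be the pullback under $f$ of a nonzero derivation of $\mathbb{C}(X_0)$; as $k/\mathbb{C}(X_0)$ is a finite extension this determines $v$ uniquely, makes $\mathbb{C}(X_0)$ a differential subfield of $(k,v)$, and renders $v$ projectable along $f$.

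Next I would reformulate $A_{\mathfrak m}$ geometrically. On $\mathrm{GL}_n$ with coordinate matrix $X=(X^i_j)$ consider the right-invariant Maurer--Cartan form $\Theta=dX\,X^{-1}$, an $\mathfrak{sl}_n$-valued $1$-form with $R_g^*\Theta=\Theta$ for every $g$ (here $R_g$ is the right translation $X\mapsto Xg$). If $\sigma$ denotes the full system of solutions, viewed as the map $p\mapsto Y(p)$, then $\sigma^*\Theta=dY\,Y^{-1}$ and $A=\imath_v\sigma^*\Theta$; projecting the values onto $\mathfrak{m}$ gives $A_{\mathfrak m}=\imath_v\sigma^*\Theta_{\mathfrak m}$, where $\Theta_{\mathfrak m}=\mathrm{pr}_{\mathfrak m}\circ\Theta$. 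Because $\mathrm{pr}_{\mathfrak m}$ is a fixed linear map and $\Theta$ is right invariant, $\Theta_{\mathfrak m}$ is again invariant under all right translations by elements of $\widetilde G$.

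The core of the argument is then a descent. By the very definition of $X_0$ (Lemma \ref{covgr}) the orbit $\sigma\widetilde G$ is single valued along $X_0$, so the multivaluedness of $\sigma$ over $X_0$ is by right multiplication by elements of $\widetilde G$. Since $\Theta_{\mathfrak m}$ is right-$\widetilde G$-invariant, the matrix of $1$-forms $\sigma^*\Theta_{\mathfrak m}$ should be single valued on $X_0$, i.e. the pullback $f^*\beta$ of a meromorphic matrix of $1$-forms $\beta$ on $X_0$. Contracting with $v$, which is $f$-projectable to a derivation $\partial_0$ of $\mathbb{C}(X_0)$, would then give $A_{\mathfrak m}=\imath_v f^*\beta=f^*(\imath_{\partial_0}\beta)$, whose entries lie in $\mathbb{C}(X_0)$. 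Combined with $A_{\mathfrak g}\in\mathfrak{g}(k)$ this yields $(a^i_j)\in M_{n\times n}(\mathbb{C}(X_0))+\mathfrak{g}(k)$.

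The step that needs the most care — and that I expect to be the main obstacle — is making ``$\sigma^*\Theta_{\mathfrak m}$ descends to $X_0$'' precise. For a deck transformation $\rho$ of the finite cover $f$ one has $\sigma\circ\rho=\sigma\cdot c$ with $c:X\to\widetilde G$, and a direct computation gives $\rho^*\sigma^*\Theta_{\mathfrak m}=\sigma^*\Theta_{\mathfrak m}+\bigl[\mathrm{Ad}(\sigma)(dc\,c^{-1})\bigr]_{\mathfrak m}$. Thus descent is equivalent to the vanishing of the correction term, that is, to $c$ being \emph{constant} (a fixed element of $\widetilde G$), so that $dc\,c^{-1}=0$. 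Establishing that the deck transformations of $X\to X_0$ act on the solution by constant right-$\widetilde G$-translations — equivalently, that $\sigma^*\Theta_{\mathfrak m}$ is $f$-basic — is the technical heart of the proof; here I would exploit the finiteness of $f$ together with the normalization $\det(y^i_j)=1$, which pins the relevant ambiguity to $\widetilde G$ rather than to a larger group. The summand $\mathfrak{g}(k)$ in the statement is exactly what absorbs the residual gauge freedom, namely the choice of the invariant complement $\mathfrak{m}$ and of $A_{\mathfrak g}$.
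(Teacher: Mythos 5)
You correctly reduce the lemma, via a $G$-invariant splitting $\mathfrak{sl}_n=\mathfrak{g}\oplus\mathfrak{m}$, to the statement $A_{\mathfrak{m}}\in M_{n\times n}(\mathbb{C}(X_0))$, and your choice of $v$ (the unique extension to $k$ of a nonzero derivation of $\mathbb{C}(X_0)$) agrees with the paper's. But the descent step, which you yourself flag as the heart of the matter, is a genuine gap, and the route you sketch for closing it cannot work. First, $k/\mathbb{C}(X_0)$ is a finite but in general non-normal extension, so invariance under deck transformations of $f$ does not detect being a pullback from $X_0$; you would at least have to pass to a Galois closure. Second, your asserted equivalence ``the correction term $\bigl[\mathrm{Ad}(\sigma)(dc\,c^{-1})\bigr]_{\mathfrak{m}}$ vanishes iff $c$ is constant'' is false: $\mathrm{Ad}(Y)$ does not preserve the splitting $\mathfrak{g}\oplus\mathfrak{m}$ (there is no reason for the values of $Y$ to normalize $G$), so the $\mathfrak{m}$-component of the correction can vanish while $dc\,c^{-1}\ne 0$. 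Indeed, granting the lemma, one computes from $Y\circ\rho=Yc$ that $A\circ\rho-A=\mathrm{Ad}(Y)\bigl(v(c)c^{-1}\bigr)\in\mathfrak{g}(k)$, with $c$ typically non-constant. Worse, proving that $c$ \emph{is} constant for all deck transformations (over the Galois closure) is equivalent to $A\circ\rho=A$ for all of them, i.e. to $(a^i_j)\in M_{n\times n}(\mathbb{C}(X_0))$ with no $\mathfrak{g}(k)$ term at all. By the paper's corollary in the subsection on standard connections, that stronger conclusion characterizes connections projectively equivalent to pullbacks of standard connections; it holds automatically only when $\mathfrak{g}=0$ (e.g. $G$ finite, where $c$ maps into a finite group and is forced to be constant, so your argument does go through in Klein's algebraic setting), and it fails for positive-dimensional $G$. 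The $\mathfrak{g}(k)$ summand in the lemma records precisely the failure of $c$ to be constant; it is not merely the gauge ambiguity in choosing $\mathfrak{m}$ and $A_{\mathfrak{g}}$.

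For contrast, the paper's proof avoids descent and coverings entirely. Differentiating the relations $P_l(y^i_j)=f_l$, where the $P_l$ generate $\mathbb{C}[X^i_j]^{\widetilde{G}}$ and $f_l\in\mathbb{C}(X_0)$, produces the inhomogeneous linear system (\ref{syseq}) in the unknowns $b^\iota_\lambda$; its coefficients $\frac{\partial P_l}{\partial X^\iota_\kappa}(y^i_j)\,y^\lambda_\kappa$ are shown to be $\widetilde{G}$-invariant, hence lie in $\mathbb{C}(y^i_j)^{\widetilde{G}}=\mathbb{C}(X_0)$, as do the right-hand sides $v(f_l)$. The matrix $(a^i_j)$ solves this system over $k$, so the system is consistent and therefore solvable over its field of coefficients $\mathbb{C}(X_0)$, giving $(b^i_j)\in M_{n\times n}(\mathbb{C}(X_0))$; finally $(a^i_j)-(b^i_j)$ solves the associated homogeneous system, whose solution space is exactly $\mathfrak{g}$. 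Any completion of your differential-geometric argument would in effect have to re-prove that the $\widetilde{G}$-invariants control the variation of $A$ under deck transformations, which is what this linear-algebra computation does directly.
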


\begin{proof} Let $P_l=P_l(X^i_j)\in\mathbb{C}[X^i_j]^{\widetilde{G}}$,
for $l\in\{1,\ldots,r\}$ be generators of the
$\widetilde{G}$-invariant subalgebra
$\mathbb{C}[X^i_j]^{\widetilde{G}}$. Let $f_l\in k$ be such that
$P_l-f_l\in I$, $l\in\{1,\ldots,r\}$. We denote by $\frac{\partial
P_l}{\partial X^\iota_\kappa}(X^i_j)$ the partial derivative of
$P_l$ with respect to $X^\iota_\kappa$.

We introduce a differential field
$\mathbb{C}(y^i_j)(b^\iota_\kappa)_{\iota,\kappa\in\{1,\ldots,n\}}$,
where the $b^\iota_\kappa$'s are variables, with derivation
$\tilde{v}_0$ defined by:
\begin{eqnarray*}
\tilde{v}_0(y^i_j) & = & b^i_ky^k_j\\
\tilde{v}_0(b^i_j) & = & 0.
\end{eqnarray*}
Using the chain rule we obtain
\begin{eqnarray*}
\tilde{v}_0(P_l(y^i_j)) & = & \frac{\partial P_l}{\partial X^\iota_\kappa}(y^i_j)\tilde{v}_0(y^\iota_\kappa)\\
                        & = & \frac{\partial P_l}{\partial X^\iota_\kappa}(y^i_j)b^\iota_\lambda y^\lambda_\kappa.
\end{eqnarray*}
We extend the action of $\widetilde{G}$ on $\mathbb{C}(y^i_j)$ to
$\mathbb{C}(y^i_j)(b^\iota_\kappa)_{\iota,\kappa\in\{1,\ldots,n\}}$
by letting each $b^\iota_\kappa$ be fixed by $\widetilde{G}$. The
chain rule then implies that for $(g^i_j)\in \widetilde{G}$ one
has
\begin{eqnarray*}
\tilde{v}_0(P_l(y^i_lg^l_j)) & = & \frac{\partial P_l}{\partial X^\iota_\kappa}(y^i_lg^l_j)\tilde{v}_0(y^\iota_\mu g^\mu_\kappa)\\
                        & = & \frac{\partial P_l}{\partial X^\iota_\kappa}(y^i_lg^l_j)b^\iota_\lambda y^\lambda_\mu g^\mu_\kappa.
\end{eqnarray*}
The equality $P_l(y^i_j)=P_l(y^i_lg^l_j)$ in turn implies
\[
\frac{\partial P_l}{\partial X^\iota_\kappa}(y^i_j)y^\lambda_\kappa =
\frac{\partial P_l}{\partial X^\iota_\kappa}(y^i_lg^l_j)y^\lambda_\mu g^\mu_\kappa\quad\forall\ \iota,\lambda,
\]
for all $(g^i_j)\in \widetilde{G}$, whence $\frac{\partial
P_l}{\partial X^\iota_\kappa}(y^i_j)y^\lambda_\kappa\in
\mathbb{C}(y^i_j)^{\widetilde{G}}$ for each $\iota,\lambda$.

Let $v$ be a non-trivial derivation of
$\mathbb{C}(X_0)=\mathbb{C}(y^i_j)^{\widetilde{G}}$ (unless
$\mathbb{C}(X_0)=\mathbb{C}$ in which case we let $v$ be any
element in $\mathscr{T}X$). Note that since $P_l$ is
$\widetilde{G}$-invariant we have
$P_l(y^i_j)=f_l\in\mathbb{C}(X_0)\subseteq \mathbb{C}(y^i_j)$.
Consider the following system of linear equations in the variables
$b^\iota_\lambda$ with coefficients in $\mathbb{C}(X_0)$:
\begin{eqnarray}\label{syseq}
\frac{\partial P_l}{\partial X^\iota_\kappa}(y^i_j) y^\lambda_\kappa b^\iota_\lambda=v(f_l)\quad l\in\{0,\ldots, r\}.
\end{eqnarray}
This system has solutions in $k$ (apply $v$ on both sides of the equalities $P_l(y^i_j)=f_l$ in $K$). The system of equations is therefore consistent, and the system can thus be solved in the field of coefficients $\mathbb{C}(X_0)$. Specialize $b^\iota_\lambda$ to such solutions, so that $(b^i_j)\in M_{n\times n}(\mathbb{C}(X_0))$. When we apply $v$ on $P_l(y^i_j)=f_l$ in $K$, we obtain the solutions $a^\iota_\lambda$ to (\ref{syseq}). Hence the $(a^i_j)-(b^i_j)$ is a solution to the homogeneous system associated to (\ref{syseq}); but the left hand side of the equations in the system are the polynomials defining $\mathfrak{g}$, so $(a^i_j)-(b^i_j)\in\mathfrak{g}(k)$.\qed\end{proof}

\begin{rem} We have
\[
\xymatrix{
\mathbb{C}(y^i_j)\ar@{.>}[rr] &  & K\\
\mathbb{C}(y^i_j)^{Z(\widetilde{G})}\ar@{-}[d]\ar@{^{(}->}[urr]\ar@{.}[u] & & k\ar@{-}[u] \\
\mathbb{C}(X_0)\ar@{^{(}->}[urr] & &
}
\]
\end{rem}

\begin{defn}
We say that the projective Fano curve $X_0$ is \emph{degenerate}
if it is is not $1$-dimensional, i.e. if
$\mathbb{C}(X_0)=\mathbb{C}$.
\end{defn}

\begin{prop}\label{propfd}
The projective Fano curve is degenerate if and only if $G$ is
connected and $\mathbb{C}(y^i_j)$ corresponds to the field of
rational functions over a coset of $G$ in
$\textrm{GL}_n(\mathbb{C})$.
\end{prop}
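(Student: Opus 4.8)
The plan is to translate degeneracy of $X_0$ into the position of the solution curve relative to the cosets of $G$. Let $W\subseteq GL_n(\mathbb{C})$ be the Zariski closure of the image of the analytic map $p\mapsto (y^i_j(p))$. Its coordinate ring is the image $\mathbb{C}[y^i_j]$ of $\mathbb{C}[X^i_j,1/\det]$ under $\Psi$, which is an integral domain since it sits inside the field $K$; hence $W$ is irreducible and $\mathbb{C}(W)=\mathbb{C}(y^i_j)$. Because the ideal $I$ is stable under the right action $X^i_j\mapsto X^i_lg^l_j$ of $G$, so is the ideal $I\cap\mathbb{C}[X^i_j,1/\det]$, and therefore $W$ is invariant under right multiplication by $G$; that is, $W$ is a nonempty union of cosets $wG$.

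First I would record the geometry of the quotient. Since $G$ is reductive, $GL_n(\mathbb{C})/G$ is the categorical quotient with $\mathbb{C}[GL_n(\mathbb{C})/G]\simeq\mathbb{C}[X^i_j]^G$, and since each coset $wG$ is closed (being the image of the closed subgroup $G$ under the isomorphism $g\mapsto wg$) the quotient map $\pi:GL_n(\mathbb{C})\to GL_n(\mathbb{C})/G$ is geometric, with fibres exactly the cosets $wG$. By Proposition \ref{fanoideal}, together with the description of the projective Fano curve as the cone over the image of $p\mapsto (y^i_j(p))\cdot G$, the curve $X_0$ is bi-rational to the projectivisation of the irreducible image $\pi(W)$. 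Consequently $X_0$ is degenerate, i.e. $\mathbb{C}(X_0)=\mathbb{C}$, if and only if $\pi(W)$ is a single point, which in turn holds if and only if $W$ lies in a single fibre, i.e. in one coset $wG$.

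Granting this, both implications are short. For the forward direction, degeneracy yields $W\subseteq wG$ for some $w$; as $W$ is a nonempty union of cosets it must equal that coset, so $W=wG$ and $\mathbb{C}(y^i_j)=\mathbb{C}(wG)$ is the field of rational functions over a coset of $G$. Since $W=wG\simeq G$ via $g\mapsto wg$ is irreducible and an algebraic group is irreducible exactly when connected, $G$ is connected. Conversely, if $W=wG$ then every generator $P_l\in\mathbb{C}[X^i_j]^G$ is constant on the single orbit $W$, so $f_l=P_l(y^i_j)=P_l(w)\in\mathbb{C}$; hence every ratio $f_i^{m_i}/f_j^{m_j}$ of Proposition \ref{fanoideal} is constant, $\mathbb{C}(X_0)=\mathbb{C}$, and $X_0$ is degenerate.

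The hard part will be the middle step: matching $\mathbb{C}(X_0)$, defined through the maximal homogeneous ideal and the cone construction, with the function field of the affine image $\pi(W)$, so that ``$\pi(W)$ is a point'' and ``$X_0$ degenerate'' are genuinely equivalent. Concretely this amounts to verifying that passing to the cone over $\pi(W)$ and then to $\textrm{Proj}$ preserves the dimension count — equivalently, that the weighted ratios in Proposition \ref{fanoideal} exhaust $\mathbb{C}$ precisely when the $f_l$ are simultaneously constant — and to confirming that reductivity really does force the fibres of $\pi$ to be single cosets rather than larger $G$-invariant sets.
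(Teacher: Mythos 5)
Your route is structurally identical to the paper's own proof: your $W$ is the variety of the prime, $G$-stable ideal $I_\mathbb{C}=\ker\Psi\upharpoonright_{\mathbb{C}[X^i_j,1/\det]}$, your $\pi(W)$ is the variety of $I_G=(I_\mathbb{C})^G$, and both arguments run the same way --- degeneracy forces the image in the invariant quotient to be a point, reductivity makes the fibres of the quotient single closed cosets, so the irreducible coset-saturated $W$ is one coset, and irreducibility gives connectedness of $G$; the converse is the same one-line orbit computation in both. The trouble is that the step you explicitly flag as ``the hard part'' and leave unproved --- that $X_0$ degenerate implies $\pi(W)$ is a single point --- is exactly the crux, and it cannot be proved as you have stated it: projectivisation loses the scaling direction, so degeneracy of $X_0$ only says that $\pi(W)$ lies on a line through the origin of the invariant affine space, not that it is a point. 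A concrete failure: for the Bessel equation $x^2y''+xy'-(x^2+\nu^2)y=0$ with $\nu\notin\frac{1}{2}+\mathbb{Z}$, the Galois group is $SL_2(\mathbb{C})$ (reductive and unimodular), $\mathbb{C}[X^i_j]^G=\mathbb{C}[\det]$, and $f_1=\det(y^i_j)=c/x$ is non-constant; then $J=0$, so $X_0=\textrm{Proj}\ \mathbb{C}[\det]$ is a point and hence degenerate, yet $\pi(W)$ is a curve, $W$ is a one-parameter family of cosets, and $\mathbb{C}(y^i_j)=K$ has transcendence degree $\dim G+1$. In your own terms: all weighted ratios $f_i^{m_i}/f_j^{m_j}$ are constant (there is only one generator) while the $f_l$ are not simultaneously constant, so the two conditions you hope to identify are genuinely inequivalent without a further hypothesis.

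The missing ingredient is a normalization --- and it is worth knowing that the paper's own proof makes the same leap: it asserts that ``$I_G$ [corresponds] to a point in that line'', whereas primality of $I_G$ only yields that $V(I_G)$ is a point or the whole line. What closes the gap is the hypothesis of Lemma \ref{lem0}: after a projective equivalence (as in the theorem following Lemma \ref{lem1}) one may assume $P(y^i_j)=1$ for some homogeneous $P\in\mathbb{C}[X^i_j]^G$ of positive degree, e.g.\ $\det(y^i_j)=1$. This pins the cone direction: writing $f_0=P(y^i_j)=1$, every generator satisfies $f_l^{m_l}=f_l^{m_l}/f_0^{m_0}\in\mathbb{C}(X_0)=\mathbb{C}$, and since $\mathbb{C}$ is algebraically closed in $k$ this forces $f_l\in\mathbb{C}$; equivalently, $V(I_G)\subseteq V(J)\cap\{P=1\}$ is finite, hence, being irreducible and nonempty, a single point. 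With that normalization added (either to the statement or secured by this preliminary reduction), the remaining steps of your argument --- closed orbits of a reductive group, fibres of the categorical quotient equal to single cosets, a coset-saturated irreducible subset of one fibre being one coset, and irreducible $\Leftrightarrow$ connected for algebraic groups --- are correct and complete the proof.
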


\begin{proof} Let $I_\mathbb{C}$ be the kernel of the evaluation map
$\Psi\upharpoonright:\mathbb{C}[X^i_j,\frac{1}{\det}]\rightarrow
K$ sending $X^i_j\mapsto y^i_j$. Then $I_\mathbb{C}$ is a prime
ideal. $I_\mathbb{C}$ is invariant under the $G$-action, so
passing to the quotient we see that $(I_\mathbb{C})^G=I_G$ is the
kernel of the restriction of the evaluation map to
$\mathbb{C}[X^i_j,\frac{1}{\det}]^G\rightarrow K^G=k$. Again,
$I_G$ is a prime ideal. Because the Fano curve is degenerate, the
maximal homogeneous ideal $J$ contained in $I_G$ corresponds at
the level of varieties to a line, and $I_G$ to a point in that
line. So we conclude that $G$ acts transitively by left
multiplication over the subvariety of $\textrm{GL}_n(\mathbb{C})$
defined by $I_\mathbb{C}$. In other words, $I_\mathbb{C}$ defines
a coset of $G$. Finally, because $I_\mathbb{C}$ is prime, this
coset is connected, whence $G$ is connected. This proves the
necessity in the statement of the proposition. The sufficiency
follows immediately by noting that $G$ acts transitively on its
cosets.\qed\end{proof}

\begin{rem}
When $G$ is connected the field of rational functions over a coset
of $G$ is isomorphic to $\mathbb{C}(G)$ because the coset and $G$
are isomorphic varieties.
\end{rem}

\begin{rem}
When $\mathbb{C}(X_0)=\mathbb{C}$, the system (\ref{syseq}) is
homogeneous so $(a^i_j)\in\mathfrak{g}(k)$.
\end{rem}

\subsection{Projective Equivalence and Pullbacks}

Let us consider the algebraic properties of projective equivalence and of pullbacks.

Let us start with projective equivalences. We put another
meromorphic connection $\nabla_1$ on $\Pi:E\rightarrow X$ and we
assume that $\nabla$ and $\nabla_1$ are projectively equivalent.
This means that there is a $1$-dimensional meromophic bundle
$P:L\rightarrow X$ with connection $\nabla'$ such that $\nabla_1$
can be identified with $\nabla'\otimes\nabla$. We make this
identification explicit by fixing (as before) a global frame
$(e_1,\ldots,e_n)$ for $\mathscr{E}(X)$ and a non-zero global
section $s^1\in\mathscr{L}^*(X)$ such that the mapping $V\mapsto
s_1\otimes V$ is a horizontal isomorphism
$(E,\Pi,\nabla_1)\rightarrow (L\otimes_X
E,P\otimes\Pi,\nabla'\otimes\nabla)$, where $s^1(s_1)=1$.

Let $U\subseteq X$ be an open set avoiding the singularities of
$\nabla$ and of $\nabla'$, and let $h\in\mathscr{L}(U)$ be such
that $\nabla'h=0$. We set $f:=s^1(h)$. As usual, we choose
$(V_1,\ldots,V_n)$ a local horizontal frame of $\mathscr{E}(U)$
with respect to $\nabla$, and $y^i_j=e^i(V_j)$. In particular the
coordinates of $h\otimes V_j$ on the frame $(s_1\otimes
e_1,\ldots,s_1\otimes e_n)$ are $s^1\otimes e^i(h\otimes
V_j)=fy^i_j$.

Let $W_j\in\mathscr{E}(U)$ be defined for $j\in\{1,\ldots,n\}$ by
\[
W_j=fy^i_je_i,
\]
so that $s^1\otimes e^i(s_1\otimes W_j)=fy^i_j$ and $s_1\otimes W_j=h\otimes V_j$. Thus
\[
\nabla_1W_j=\nabla'\otimes\nabla(s_1\otimes W_j)=\nabla'\otimes\nabla(h\otimes V_j)=0.
\]
In particular, a Picard-Vessiot extension for $\nabla_1$ is generated by $(fy^i_j)$.

\begin{prop}\label{propalg1}
Under the hypotheses of Compoint's theorem, if $\nabla$ and $\nabla_1$ are projectively equivalent then their projective Fano curves coincide.
\end{prop}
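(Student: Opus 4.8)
The plan is to reduce the statement to a single observation: the induced connections on the endomorphism bundle $\textrm{Hom}_k(\mathscr{E},\mathscr{E})$ coming from $\nabla$ and from $\nabla_1$ are \emph{literally the same}, so the two connections share one and the same algebra of first integrals, and their Fano maps then differ only by an overall scalar factor to which the projective (homogeneous) construction is blind.

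First I would record how $\nabla_1$ sits over $\nabla$ on $E$. Since $V\mapsto s_1\otimes V$ is a horizontal isomorphism $(\mathscr{E},\nabla_1)\to(\mathscr{L}\otimes\mathscr{E},\nabla'\otimes\nabla)$, writing $\nabla'_v s_1=\alpha s_1$ with $\alpha=s^1(\nabla'_v s_1)\in k$ gives $\nabla_{1,v}=\nabla_v+\alpha\cdot\textrm{id}$. Passing to the induced connection on $\textrm{Hom}_k(\mathscr{E},\mathscr{E})$, which acts by the commutator $\psi\mapsto\nabla_v\circ\psi-\psi\circ\nabla_v$ (cf. Subsection \ref{galgr}), the scalar term $\alpha$ cancels, so the connection induced by $\nabla_1$ equals that induced by $\nabla$. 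This is the coordinate-free shadow of the line-bundle computation already carried out in the symmetry part of the proof that projective equivalence is an equivalence relation, where $\mathscr{L}^*\otimes\mathscr{L}$ was seen to carry the trivial connection. Consequently
\[
\textrm{S}_k(\textrm{Hom}_k(\mathscr{E}(X),\mathscr{E}(X)))
\]
is the very same graded $\mathbb{C}$-algebra whether computed for $\nabla$ or for $\nabla_1$; in particular its finite generation (hence the existence of the projective Fano curve of $\nabla_1$) follows from Compoint's hypotheses on $G$ alone, even though the Galois group of $\nabla_1$, scaled by $f$, need not be unimodular.

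Next I would compare the two Fano maps on this common domain. By the computation preceding the proposition, a $\nabla_1$-horizontal frame over $U$ has coordinate matrix $(fy^i_j)$ with $f=s^1(h)$, against the matrix $(y^i_j)$ for $\nabla$; thus the $\mathscr{M}(U)$-valued point $H_1$ equals $fH$. Evaluating a homogeneous first integral of degree $d$, represented in the frame $F$ by a polynomial $P$, yields $H_1(P)=P(fy^i_j)=f^{d}P(y^i_j)=f^{d}H(P)$, so on each graded piece the two Fano maps differ precisely by multiplication by the nonzero meromorphic function $f^{d}$.

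Finally I would conclude. Since $f$ is a nonzero element of $\mathscr{M}(U)$, for every homogeneous $P$ one has $H(P)=0$ if and only if $H_1(P)=0$; as a homogeneous ideal is determined by its homogeneous members, the maximal homogeneous ideal contained in $\ker H$ coincides with the one contained in $\ker H_1$. By Definition \ref{fanosymalg} these ideals cut out the same projective variety, whose non-singular model is by definition the projective Fano curve, and therefore the projective Fano curves of $\nabla$ and $\nabla_1$ coincide. The only genuinely delicate point is the first step, verifying that the induced connections on the Hom-bundle agree so that the identification of the two first-integral algebras is canonical; everything afterward is merely the bookkeeping of a scalar that the projective construction discards.
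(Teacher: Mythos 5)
Your proof is correct, and its decisive computation---evaluating a degree-$d$ first integral at the $\nabla_1$-horizontal frame, i.e. $P(fy^i_j)=f^{d}P(y^i_j)$---is precisely the paper's entire proof, which consists of that single line applied to homogeneous $P\in\mathbb{C}[X^i_j]^G$ with $P(y^i_j)=0$. Where you genuinely differ is in the surrounding justification. The paper works through Proposition \ref{fanoideal}: it identifies the homogeneous ideal cutting out each Fano curve with the homogeneous $G$-invariants killed by the evaluation map, tacitly using that the same invariant algebra serves both connections. You instead prove this identification intrinsically: from $\nabla_{1,v}=\nabla_v+\alpha\,\textrm{id}$ and the commutator form of the induced connection on $\textrm{Hom}_k(\mathscr{E},\mathscr{E})$ (Subsection \ref{galgr}) you conclude that the two endomorphism connections literally coincide, hence the two graded algebras of first integrals are equal on the nose. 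This buys something the paper leaves implicit: the two Fano maps then have the same domain, so comparing the homogeneous parts of their kernels is meaningful, and finite generation---hence the very existence of the projective Fano curve of $\nabla_1$---is inherited from $\nabla$ even though $\nabla_1$ itself need not have unimodular Galois group, so Compoint's theorem need not apply to it directly. The paper's route is shorter because it stays in the coordinates supplied by the frame $(fy^i_j)$; yours is more self-contained, never invoking the Galois group of $\nabla_1$ at all.
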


\begin{proof} Let $P(X^i_j)\in\mathbb{C}[X^i_j]^G$ be homogeneous of degree $d$ such that $P(y^i_j)=0$. Then $P(fy^i_j)=f^dP(y^i_j)=0$.\qed\end{proof}

Now let us turn our attention to pullbacks. Let
$\Pi_0:E_0\rightarrow X_0$ be an $n$-dimensional meromorphic
vector bundle with connection $\nabla_0$ given by
\[
\nabla_{0 v_0}e_i=-b^j_ie_j
\]
for a fixed global frame $(e_1\ldots,e_n)$ of
$\mathscr{E}_0(X_0)$. The pullback to $X$ of
$(E_0,\Pi_0,\nabla_0)$ is (algebraically) defined by taking the
tensor product
\[
\mathscr{E}_0\otimes_{\mathscr{M}_0}\mathscr{M}
\]
and regarding it as a sheaf of differential $\mathscr{M}$-modules. In particular
\[
\mathscr{E}_0\otimes_{\mathscr{M}_0}\mathscr{M}(X)= \mathscr{E}_0(X_0)\otimes_{\mathbb{C}(X_0)}k.
\]
Therefore if $\nabla$ is the pullback of $\nabla_0$ then we have:
\[
\nabla_{\widetilde{v_0}}(e_i\otimes 1)=-b^j_i(e_j\otimes 1)
\]
where $\widetilde{v_0}$ stands for the lifting of $v_0$ to
$\mathscr{T}X$. Now if $v$ is another derivation in $\mathscr{T}X$
then there is an $f_v\in k$ such that $v=f_v\widetilde{v_0}$ and
so $\nabla_v=f_v\nabla_{\widetilde{v_0}}$. If we denote the basis
$e_i\otimes 1$ by $e_i$, then we have:
\[
\nabla_ve_i=-f_vb^j_ie_j
\]

\begin{rem}
If, as above, $\nabla$ is the pullback of $\nabla_0$ and
$(e_1,\ldots,e_n)$ is a cyclic basis for $\nabla_{0 v_0}$, i.e.
$\nabla_{0 v_0}e_i=e_{i+1}$ for $i\in\{1,\ldots,n-1\}$, then
$(e_1\otimes 1,\ldots,e_n\otimes 1)$ is a cyclic basis for
$\nabla_{\widetilde{v_0}}$ but not for $\nabla_v$. A cyclic basis
for $\nabla_v$ is $(e_1\otimes 1,f_ve_2\otimes 1,\ldots,
f_v^{n-1}e_n\otimes 1)$. Now, if $(y^i_je_i)_j$ is a full-system
of solutions for $\nabla_0$ then $(y^i_je_i\otimes 1)_j$ is a
full-system of solutions for $\nabla$, therefore a Picard-Vessiot
extension for $\nabla$ is generated by $(y^i_j)$ too.
\end{rem}

\subsection{Standard connections}

\begin{defn}
Let $k_0$ be a subfield of $k$. We say that $\nabla$ is \emph{defined over} $k_0$ if in (\ref{eq1})
\[
(a^i_j)\in M_{n\times n}(k_0)+\mathfrak{g}(k)
\]
\end{defn}

\begin{rem}
It follows from the definition and Lemma \ref{lem1} that, when $\nabla$ is such that $\det(y^i_j)=1$, the connection is defined over $X_0$.
\end{rem}

\begin{thm}
If $\nabla$ has reductive Galois group the connection is projectively equivalent to a connection defined over its projective Fano curve.
\end{thm}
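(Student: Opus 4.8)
The plan is to twist $\nabla$ by a suitable rank-one connection so as to reach a projectively equivalent connection that is \emph{unimodular}, and then invoke Lemma \ref{lem1}. Write the system \eqref{eq1} as $v(y^i_j)=a^i_ky^k_j$ with solution matrix $Y=(y^i_j)$ and $W=\det Y$; Jacobi's formula gives $v(W)=(\textrm{tr}\,A)W$ with $\textrm{tr}\,A=a^i_i\in k$. I would set $c:=\tfrac1n a^i_i\in k$ and put on the trivial line bundle the meromorphic connection $\nabla_1$ with $\nabla_{1,v}s_1=c\,s_1$, so that a local horizontal section $h=fs_1$ satisfies $v(f)=-cf$. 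Forming $\tilde\nabla:=\nabla_1\otimes\nabla$ (projectively equivalent to $\nabla$ by Definition \ref{defnprojequiv}), the computation preceding Proposition \ref{propalg1} shows its Picard--Vessiot extension is generated by $\tilde y^i_j=fy^i_j$, and a direct calculation gives connection matrix $\tilde A=A-cI$, which is trace-free.

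First I would check unimodularity: $\det\tilde Y=f^nW$ and $v(f^nW)=(-a^i_if^n)W+f^n(a^i_iW)=0$, so $f^nW$ is a nonzero constant, and after rescaling $f$ by a constant one has $\det\tilde Y=1$. Thus $\tilde\nabla$ is unimodular and its Galois group $\tilde G$ lies in $SL_n(\mathbb{C})$.

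The key step is that $\tilde G$ is again reductive. For this I would pass to the compositum Picard--Vessiot field $\hat K=k(y^i_j,f)$: each $\sigma$ of its group satisfies $\sigma(y^i_j)=y^i_lg^l_j$ with $(g^l_j)\in G$ and $\sigma(f)=\chi(\sigma)f$ for a constant $\chi(\sigma)$, so $\sigma$ acts on $\tilde Y$ by right multiplication by $\chi(\sigma)(g^l_j)$. Reducing modulo the scalar group $\mathbb{G}_m$ the factor $\chi(\sigma)$ disappears, so the image of $\tilde G$ in $PGL_n(\mathbb{C})$ equals the image $PG=G/(G\cap\mathbb{G}_m)$ of $G$; being a quotient of the reductive group $G$, the group $PG$ is reductive. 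Since $\tilde G\subseteq SL_n(\mathbb{C})$ sits in an extension $1\to\tilde G\cap\mu_n\to\tilde G\to PG\to 1$ with $\tilde G\cap\mu_n$ finite and central, the unipotent radical of $\tilde G^\circ$ maps trivially to $(PG)^\circ$ and is connected, hence trivial; so $\tilde G$ is reductive. Now $\tilde G$ is reductive and unimodular, so by Lemma \ref{lem1} (as noted in the remark preceding the statement) $\tilde\nabla$ is defined over its projective Fano curve $\tilde X_0$.

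It remains to identify $\tilde X_0$ with $X_0$. By Proposition \ref{propalg1} projectively equivalent connections share their projective Fano curve; concretely, the generators of $\mathbb{C}(X_0)$ exhibited in Proposition \ref{fanoideal} are ratios $f_i^{m_i}/f_j^{m_j}$ of matched total degree $m_in_i=m_jn_j$, hence unchanged under the scaling $y^i_j\mapsto fy^i_j$, and as degree-zero rational invariants they depend only on $G\cdot\mathbb{G}_m=\tilde G\cdot\mathbb{G}_m$ (the common preimage of $PG$ in $GL_n(\mathbb{C})$), so they agree whether computed from the $G$- or the $\tilde G$-invariants. Therefore $\tilde X_0=X_0$, and $\tilde\nabla$, being projectively equivalent to $\nabla$, is defined over $X_0$. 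The main obstacle is exactly the reductivity of the twisted group $\tilde G$ together with the verification that replacing $G$ by $\tilde G$ under the twist leaves the Fano curve unchanged; both hinge on the invariance of the projective Galois group $PG$ under scalar twists.
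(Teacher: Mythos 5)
Your proof is correct and takes essentially the same route as the paper's: twist $\nabla$ by the rank-one connection with matrix $\tfrac{1}{n}a^i_i$ to obtain a projectively equivalent connection that is unimodular and still has reductive Galois group, then conclude by Lemma \ref{lem1}. The only difference is that you supply direct arguments for the two facts the paper simply cites --- unimodularity of the twisted group (\cite[Exercises 1.35.5]{SvdP}, which you get from Jacobi's formula) and preservation of reductivity (\cite[Proposition 2.2]{SC}, which you get from the invariance of $PG$ under scalar twists) --- and you make explicit the identification of the two Fano curves, which the paper leaves implicit in Proposition \ref{propalg1}.
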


\begin{proof} We fix the notation as above; in particular the horizontal sections of $\nabla$ satisfy the linear differential equation
\[
v(f^i)=a^i_jf^j.
\]
It is a well-known fact that if we tensor $\nabla$ with the connection given by the equation
\[
v(f)=-\frac{a^i_i}{n}f=-\frac{f}{n}\sum_ia^i_i
\]
then the resulting connection has unimodular Galois group \cite[Exercises 1.35.5]{SvdP}. In particular we may take $(y^i_j)$ such that $\det(y^i_j)=1$. Furthermore, \cite[Proposition 2.2]{SC} guarantees that the Galois group remains reductive after tensoring. We may therefore assume that $\nabla$ satisfies the hypotheses of Compoint's theorem. The result now follows from Lemma \ref{lem1}. \qed\end{proof}

\begin{rem}
In view of Proposition \ref{propfd}, the classification of connections with degenerate Fano curve is very simple: they correspond to extensions isomorphic to $G(k)/k$. Indeed, they are just the pullbacks of extensions $\mathbb{C}(G)/\mathbb{C}$, where $G$ is a connected algebraic group.
\end{rem}

\begin{defn}
We say that $\nabla$ is \emph{standard} if $K=\mathbb{C}(y^i_j)$ and $k=\mathbb{C}(X_0)$.
\end{defn}

\begin{cor}
Assume $\nabla$ has reductive Galois group and its projective Fano curve is not degenerate. The connection $\nabla$ is projectively equivalent to the pullback by a rational map of a standard connection over $X_0$ if and only if in Lemma \ref{lem1}
\[
(a^i_j)\in M_{n\times n}(\mathbb{C}(X_0)).
\]
In particular, if $G$ is finite then $\nabla$ is projectively equivalent to the pullback of a standard connection over $X_0$.
\end{cor}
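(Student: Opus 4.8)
The plan is to run both implications through the explicit pullback formalism of the previous subsection, reading off standardness from Lemma \ref{covgr}. Throughout I would work with the unimodular representative of $\nabla$ produced by the preceding Theorem, so that $\det(y^i_j)=1$ and the hypotheses of Lemmas \ref{lem1} and \ref{covgr} are in force. Since $X_0$ is non-degenerate, $\mathbb{C}(X_0)\ne\mathbb{C}$ and the inclusion $\mathbb{C}(X_0)\hookrightarrow k$ is the function-field embedding of a covering $x:X\to X_0$; fixing on $\mathbb{C}(X_0)$ the non-trivial derivation $v$ of Lemma \ref{lem1}, its unique extension to the algebraic extension $k$ is exactly the lift of $v$ along $x$, so that pullback along $x$ with this derivation leaves connection matrices unchanged (cf.\ the formula $\nabla_{\widetilde{v_0}}(e_i\otimes 1)=-b^j_i(e_j\otimes 1)$).

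Assume first that $(a^i_j)\in M_{n\times n}(\mathbb{C}(X_0))$. I would define $\nabla_0$ on $X_0$ by the matrix equation $v(g^i)=a^i_jg^j$ with coefficients read in $\mathbb{C}(X_0)$; by the previous paragraph its pullback $x^*\nabla_0$ has matrix exactly $(a^i_j)$, hence coincides with the unimodular representative of $\nabla$, so $\nabla$ is projectively equivalent to $x^*\nabla_0$. To see that $\nabla_0$ is standard, note that the $y^i_j$ satisfy $v(y^i_j)=a^i_ky^k_j$ with $a^i_k\in\mathbb{C}(X_0)$, so they form a fundamental system of solutions of $\nabla_0$ inside $\mathscr{M}(U)$; because a non-zero derivation of meromorphic functions has constant field $\mathbb{C}$, the field $\mathbb{C}(X_0)(y^i_j)$ is a Picard--Vessiot extension of $\mathbb{C}(X_0)$, giving $K_0=\mathbb{C}(X_0)(y^i_j)$. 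Lemma \ref{covgr} (applicable since $\det(y^i_j)=1$) yields $\mathbb{C}(X_0)=\mathbb{C}(y^i_j)^{\widetilde{G}}\subseteq\mathbb{C}(y^i_j)$, so $K_0=\mathbb{C}(y^i_j)$ over the base $\mathbb{C}(X_0)$; these are precisely the two defining conditions, whence $\nabla_0$ is standard.

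For the converse, suppose $\nabla$ is projectively equivalent to $x^*\nabla_0$ with $\nabla_0$ standard over $X_0$. Working in the frame pulled back from a global frame of $X_0$ and with the lifted derivation $v$, the connection matrix of $\nabla_0$ has entries in $\mathbb{C}(X_0)$ (it lives over the curve $X_0$) and pullback preserves them, so $x^*\nabla_0$ has matrix $(a_0)^i_j\in M_{n\times n}(\mathbb{C}(X_0))$. Through the horizontal identification defining the projective equivalence, the matrix of $\nabla$ in the corresponding frame is $(a_0)^i_j+c\,\delta^i_j$ for the scalar $c\in k$ of the rank-one twist; the unimodular normalization of the preceding Theorem subtracts $\frac1n(\mathrm{tr}(a_0)+nc)\,\delta^i_j$, the term $c\,\delta^i_j$ cancels, and one is left with $(a_0)^i_j-\frac1n\mathrm{tr}(a_0)\,\delta^i_j\in M_{n\times n}(\mathbb{C}(X_0))$ because $\mathrm{tr}(a_0)\in\mathbb{C}(X_0)$. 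Thus the matrix furnished by Lemma \ref{lem1} has trivial $\mathfrak{g}(k)$-part. The ``in particular'' is then immediate: if $G$ is finite then $\mathfrak{g}=0$, so the conclusion of Lemma \ref{lem1} reads $(a^i_j)\in M_{n\times n}(\mathbb{C}(X_0))+\mathfrak{g}(k)=M_{n\times n}(\mathbb{C}(X_0))$ and the first implication applies. I expect the delicate point to be the standardness verification in the first direction --- pinning down that $\mathbb{C}(X_0)(y^i_j)$ is genuinely the Picard--Vessiot field $K_0$ with no new constants, and that it equals $\mathbb{C}(y^i_j)$ via Lemma \ref{covgr}, where the identification of $\mathbb{C}(X_0)$ with the $\widetilde{G}$-invariants and the normalization $\det(y^i_j)=1$ are essential.
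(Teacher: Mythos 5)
Your proposal is correct and follows essentially the same route as the paper's own proof: for the ``if'' direction you define $\nabla_0$ over $X_0$ by the matrix $(a^i_j)\in M_{n\times n}(\mathbb{C}(X_0))$ and realize (the normalized) $\nabla$ as its pullback along $\mathbb{C}(X_0)\hookrightarrow k$, for the ``only if'' direction you observe that pullback with the lifted derivation, followed by the unimodular normalization, leaves the connection matrix in $M_{n\times n}(\mathbb{C}(X_0))$ --- the paper phrases this same point by saying $(a^i_j)$ solves the system (\ref{syseq}) of Lemma \ref{lem1} inside $\mathbb{C}(X_0)$, so one may take $(b^i_j)=(a^i_j)$ --- and the finite case follows identically from $\mathfrak{g}=0$. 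If anything, you supply details the paper leaves implicit, namely the verification that $\nabla_0$ is standard via Lemma \ref{covgr} and the explicit cancellation of the rank-one twist $c\,\delta^i_j$ under normalization.
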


\begin{proof} After tensoring like in the proof of the theorem we obtain the following diagram:
\[
\xymatrix{
 &  & K\\
\mathbb{C}(y^i_j)\ar@{-}[d]\ar@{^{(}->}[urr] & & k\ar@{-}[u] \\
\mathbb{C}(X_0)\ar@{^{(}->}[urr] & &
}
\]
If $\nabla$ is the pullback of a standard connection over $X_0$ then in Lemma \ref{lem1}, $a^\iota_\lambda$ is a solution to the system (\ref{syseq}) in $\mathbb{C}(X_0)$ and so $(a^i_j)-(b^i_j)=0$. Conversely $(a^i_j)\in M_{n\times n}(\mathbb{C}(X_0))$ defines a connection in a rank $n$ bundle over $X_0$ and since $K=k(y^i_j)$, we conclude that $\nabla$ is the pullback through the map $\mathbb{C}(X_0)\rightarrow k$.

The algebraic case follows from the fact that if $G$ is finite then $\mathfrak{g}=0$. \qed\end{proof}

\section{The classifying ruled surfaces.}

The relevance of having a classification of a collection of mathematical objects by other mathematical objects is that we can organize the former if we have a classification of the latter. In the case of standard equations this becomes a key point because, whereas the collection of standard equations for order two is classified by coverings of the sphere by the sphere, the collection of standard equations for higher order is infinite (see \cite{ber}) and so far no structure has been given to it. Furthermore, if an algorithmic implementation of the results in this paper is expected, as there is for the order two case, it would require a systematic way of classifying standard equations.

For the remainder of the article we will assume that the projective Fano curve $X_0$ is non-degenerate. Recall that this curve is defined by the homogeneous elements in $\mathbb{C}[X^i_j]^G$ vanishing when we evaluate them at $(y^i_j)$. The aim of this section is to explain how we can associate a ruled surface to each class of projectively equivalent standard connections over $X_0$.

\begin{defn}
A \emph{ruled surface} is a surface $\Sigma$ (a two dimensional $\mathbb{C}$-variety), together with a surjective map $\pi:\Sigma\rightarrow X_0$, where $X_0$ is a curve (a one dimensional $\mathbb{C}$-variety), such that the fibers $\pi^{-1}y$ are isomorphic to $\mathbb{P}^1(\mathbb{C})$, for every $y\in X_0$.
\end{defn}

Consider an irreducible standard connection $\nabla_0$ over $X_0$ with unimodular reductive Galois group $G$ and fundamental system of solutions $(y^i_j)$; and we fix $P(X^i_j)\in\mathbb{C}[X^i_j]^G$ such that $P(y^i_j)=1$. Let $P_1,\ldots,P_r\in\mathbb{C}[X^i_j]^G$ be homogeneous generators of the subalgebra of $G$-invariants in $\mathbb{C}[X^i_j]$. We denote by $I^G$ the kernel of the evaluation $\mathbb{C}$-morphism
\begin{eqnarray*}
\Psi\upharpoonright_{\mathbb{C}[X^i_j]^G} : \mathbb{C}[X^i_j]^G & \longrightarrow & \mathbb{C}(X_0)\\
      P_l(X^i_j)    & \longmapsto     & P_l(y^i_j) \quad \forall l\in\{1,\ldots,r\},
\end{eqnarray*}
which includes the homogeneous ideal $J$ that defines the Fano curve. Geometrically we have a curve $V(I^G)$ in $(\mathbb{C}^{n\times n})^G$ generating the cone $V(J)$.

We embed $\mathbb{C}^{n\times n}$ into $\mathbb{P}^{n\times n}(\mathbb{C})$ by introducing the homogeneous coordinates $(z:x^i_j)$ for $\mathbb{P}^{n\times n}(\mathbb{C})$ and identifying $\mathbb{C}^{n\times n}$ with $z=1$. In particular we put $X^i_j=\frac{x^i_j}{z}$. We extend the action of $G$ on $\mathbb{C}^{n\times n}$ to $\mathbb{P}^{n\times n}(\mathbb{C})$ by declaring
\[
(g^i_j):z\mapsto z,\ x^i_j\mapsto x^i_lg^l_j.
\]
Now consider $\mathbb{P}^{n\times n}(\mathbb{C})\times \mathbb{P}^{n\times n-1}(\mathbb{C})$, where the second factor has homogeneous coordinates $(w^i_j)$. Again, we extend the action of $G$ by declaring that on the second factor we have
\[
(g^i_j):w^i_j\mapsto w^i_l g^l_j.
\]
So that the variety $Y$ defined by the homogeneous equations $x^i_j w^\iota_\kappa-x^\iota_\kappa w^i_j$ is the blown-up of $\mathbb{P}^{n\times n} (\mathbb{C})$ at $0$ and it is invariant under the $G$-action. Indeed,
\[
(g^i_j)(x^i_j w^\iota_\kappa- x^\iota_\kappa w^i_j)= g^l_jg^\lambda_\kappa(x^i_l w^\iota_\lambda-x^\iota_\lambda w^i_l).
\]
So we have a commutative diagram
\[
\xymatrix{
Y\ar@{^{(}->}[rr]\ar[rrd] & & \mathbb{P}^{n\times n}(\mathbb{C})\times \mathbb{P}^{n\times n-1}(\mathbb{C})\ar[d]\\
                          & & \mathbb{P}^{n\times n}(\mathbb{C})
}
\]
of $G$-morphisms which yields
\[
\xymatrix{
Y^G\ar@{^{(}->}[rr]\ar[rrd]_\varpi & & (\mathbb{P}^{n\times n}(\mathbb{C})\times \mathbb{P}^{n\times n-1}(\mathbb{C}))^G\ar[d]\\
                          & & (\mathbb{P}^{n\times n}(\mathbb{C}))^G
}
\]
We set $\Sigma:=\varpi^{-1}\overline{V(J)}$.

\begin{lem} The map:
\begin{eqnarray*}
\pi:\Sigma & \longrightarrow & X_0\\
 (z:x^i_j, w^\iota_\kappa)\cdot G & \longmapsto & (w^\iota_\kappa)\cdot G
\end{eqnarray*}
defines a ruled surface.
\end{lem}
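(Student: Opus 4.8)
The plan is to recognise $\pi$ as the descent, along the two $G$-quotients, of the second projection of the blow-up $Y$, and to exploit that this projection exhibits $Y$ as a $\mathbb{P}^1(\mathbb{C})$-bundle on which $G$ acts trivially \emph{along} the fibers. Recall that blowing up the vertex of a projective cone yields a $\mathbb{P}^1(\mathbb{C})$-bundle over the base of that cone. Since $\overline{V(J)}$ is the projective cone over the curve $X_0$, its strict transform in $Y^G$ is the surface $\Sigma$, and the geometry of $\pi$ is governed entirely by this bundle structure.

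First I would work upstairs, before taking $G$-quotients. The map $(z:x^i_j,w^\iota_\kappa)\mapsto(w^\iota_\kappa)$ realises $Y=\mathrm{Bl}_0\,\mathbb{P}^{n\times n}(\mathbb{C})$ as a $\mathbb{P}^1(\mathbb{C})$-bundle over $\mathbb{P}^{n\times n-1}(\mathbb{C})$: the fiber over a direction $[w]$ is the strict transform of the line through $0=(1:0)$ in that direction, namely $\{(s:t\,w^i_j,\,w^i_j):(s:t)\in\mathbb{P}^1(\mathbb{C})\}$. Restricting to the strict transform of $\overline{V(J)}$ cuts the base down to those directions $[w]$ lying on the cone, which are exactly the points of the projectivised cone $X_0$; the blow-up relations $x^i_jw^\iota_\kappa=x^\iota_\kappa w^i_j$ force $[w]=[x]\in X_0$ away from the vertex, while over the vertex the surviving directions form the projectivised tangent cone, again $X_0$, since $\overline{V(J)}$ is a cone. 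This already shows that $\pi$ is a well-defined surjective morphism onto $X_0$ with $\mathbb{P}^1(\mathbb{C})$-fibers upstairs, the exceptional point $(1:0,w)$ and the point at infinity $(0:w,w)$ furnishing two disjoint sections.

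The crucial observation is the behaviour of $G$ on the fibers. An element $(g^i_j)\in G$ sends $(s:t\,w^i_j,w^i_j)$ to $(s:t\,w^i_lg^l_j,\,w^i_lg^l_j)$, i.e. it moves the base direction $[w]$ to $[w\cdot g]$ but fixes the fiber coordinate $(s:t)$. Hence $G$ acts trivially along the fibers of the bundle, covering the action $[w]\mapsto[w\cdot g]$ on the base; in particular the stabiliser of any direction fixes its whole fiber pointwise. Consequently passing to the quotient neither collapses nor branches the fibers: for $y=[w]\cdot G\in X_0$ the quotient glues the copies of $\mathbb{P}^1(\mathbb{C})$ over the orbit of $[w]$ by the identity on $(s:t)$, so $\pi^{-1}(y)=\{(s:t\,w^i_j,w^i_j)\cdot G:(s:t)\in\mathbb{P}^1(\mathbb{C})\}\cong\mathbb{P}^1(\mathbb{C})$. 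Since $\Sigma$ is the strict transform of the two-dimensional cone $\overline{V(J)}$, it is a surface, and every fiber of $\pi$ is a $\mathbb{P}^1(\mathbb{C})$, so $\pi:\Sigma\to X_0$ is a ruled surface.

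I expect the main obstacle to be the descent step: checking that the $G$-quotient genuinely transports the fibration $Y\to\mathbb{P}^{n\times n-1}(\mathbb{C})$ to a fibration $Y^G\to(\mathbb{P}^{n\times n-1}(\mathbb{C}))^G$ with unchanged fibers, rather than introducing quotient singularities or ramification in the fibers. The triviality of the $G$-action along the fibers is precisely what neutralises this difficulty, reducing the descent on each fiber to the identity on $\mathbb{P}^1(\mathbb{C})$. One must also confirm that $\varpi$-preimage and $G$-quotient commute, which holds because both $Y$ and $\overline{V(J)}$ are $G$-invariant, and that near the vertex the transform defining $\Sigma$ is taken to be the strict one, so that $\Sigma$ remains equidimensional and the exceptional divisor contributes only the zero section $X_0$ rather than a spurious component.
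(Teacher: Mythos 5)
Your overall route---exhibit $Y\rightarrow\mathbb{P}^{n\times n-1}(\mathbb{C})$ as a $\mathbb{P}^1(\mathbb{C})$-bundle upstairs and descend the fibration through the $G$-quotients---is genuinely different from the paper's, which never leaves the quotient: there one takes an arbitrary point $(z:p^i_j,q^\iota_\kappa)\in\Sigma$ lying over $(q^\iota_\kappa)\cdot G$ and uses the descended blow-up relations $Q(x^i_j)R(w^\iota_\kappa)-R(x^i_j)Q(w^\iota_\kappa)=0$, for invariant $Q,R$, to conclude $Q(p^i_j)=0$ for every invariant $Q$ vanishing at $q$, hence $(p^i_j)\cdot G=(q^\iota_\kappa)\cdot G$ and $\varpi^{-1}(q^\iota_\kappa)=\overline{\{(z:q^i_j,q^\iota_\kappa)\cdot G\mid z\in\mathbb{C}\}}\simeq\mathbb{P}^1(\mathbb{C})$. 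However, your descent step contains a genuine error: the claim that $G$ acts trivially along the fibers, so that ``the stabiliser of any direction fixes its whole fiber pointwise'' and the quotient introduces no ramification in the fibers. The fiber coordinate $(s:t)$ depends on the chosen representative $w$ of the direction $[w]$. If $g\in G$ stabilises $[w]$ with $w\cdot g=\mu w$, $\mu\ne 1$, then $g$ sends $\bigl((s:t\,w^i_j),(w^i_j)\bigr)$ to $\bigl((s:t\mu\,w^i_j),(w^i_j)\bigr)$, i.e.\ it acts on the fiber by $(s:t)\mapsto(s:\mu t)$: recall $z\mapsto z$ while $x^i_j\mapsto \mu x^i_j$, and one cannot rescale $z$ and the $x^i_j$ separately in $\mathbb{P}^{n\times n}(\mathbb{C})$. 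Such elements are unavoidable in this setting: any scalar $\zeta I\in G$ with $\zeta^n=1$ stabilises \emph{every} direction and acts nontrivially on every fiber (e.g.\ $-I$, which lies in all the binary groups of the second-order appendix, or the full $\mu_n\subseteq\widetilde{G}$ appearing in Lemma \ref{covgr}). So the quotient map is branched along your two sections, not the identity fiberwise, and the ``no ramification'' assertion on which your descent rests is false.

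The conclusion survives, but only via an argument you explicitly disclaim: for a point of $X_0$ represented by an invertible matrix $q$, the stabiliser condition $q\cdot g=\mu q$ forces $g=\mu I$ with $\mu^n=1$, so the stabiliser acts on the fiber through a finite cyclic group of scalars fixing the two points $t=0$ and $s=0$, and the quotient of $\mathbb{P}^1(\mathbb{C})$ by $(s:t)\mapsto(s:\zeta t)$, $\zeta^m=1$, is again $\mathbb{P}^1(\mathbb{C})$ via $(s:t)\mapsto(s^m:t^m)$. With that substitution---quotient fibers are branched rational images of the fibers upstairs rather than copies glued by the identity---your proof can be completed, and your remark that $\Sigma$ must be read as the strict transform for $\pi$ to make sense over the vertex is a legitimate point the paper glosses over. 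Note that the paper's invariant-theoretic computation sidesteps the stabiliser analysis entirely, and in particular treats uniformly the boundary points of $X_0$ represented by non-invertible matrices, where your orbit-by-orbit description of the fibers would be considerably harder to control.
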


\begin{proof} Let $(q^\iota_\kappa)\cdot G\in X_0$. Let $R(X^i_j)\in\mathbb{C}[X^i_j]^G$ be homogeneous and such that $R(q^\iota_\kappa)\ne 0$.  If $Q(X^i_j)\in \mathbb{C}[X^i_j]^G$ is such that $Q(q^\iota_\kappa)=0$, and if $(p^i_j)\cdot G\in (\mathbb{C}^{n\times n})^G$ is such that $(z:p^i_j,q^\iota_\kappa)\in\Sigma$, then
\[
Q(p^i_j)R(q^\iota_\kappa)-R(p^i_j)Q(q^\iota_\kappa)=0,
\]
implying that $Q(p^i_j)=0$. Therefore $(p^i_j)\cdot G=(q^\iota_\kappa)\cdot G$. We have then $\varpi^{-1}(q^\iota_\kappa)=\overline{\{(z:q^i_j,q^\iota_\kappa)\cdot G|\ z\in\mathbb{C}\}}$, so that $\varpi^{-1}(q^\iota_\kappa)$ is isomorphic to $\mathbb{P}^1(\mathbb{C})$.\qed\end{proof}

\begin{rem}
If follows from the proof that the ruled surface $\Sigma$ is determined by the homogeneous polynomials
\[
Q(x^i_j)R(w^\iota_\kappa)-R(x^i_j)Q(w^\iota_\kappa),\quad Q(X^i_j)\in J,\ R(X^i_j)\in\mathbb{C}[X^i_j]^G.
\]
In particular two projectively equivalent standard connections have the same ruled surface because their ideals $J$ coincide. Now we will prove that the connection is uniquely determined then by $\pi:\Sigma\rightarrow X_0$ together with the curve $\varpi^{-1}(\overline{V(I_G)})$.
\end{rem}

\begin{lem}\label{rslem}
To a standard connection $\nabla_0$ over $X_0$ with unimodular reductive Galois group we associate a ruled surface $\pi:\Sigma\rightarrow X_0$. If two standard connections are projectively equivalent then their associated ruled surfaces coincide. Among the connections associated to $\pi:\Sigma\rightarrow X_0$, $\nabla_0$ is characterized by the curve $\varpi^{-1}(\overline{V(I_G)})$.
\end{lem}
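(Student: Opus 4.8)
The three assertions are established in turn. For the first there is nothing new to prove: the association $\nabla_0\mapsto\Sigma$ is precisely the construction carried out above, and the preceding lemma guarantees that $\pi\colon\Sigma\to X_0$ is indeed a ruled surface. For the second assertion, recall from the remark preceding the statement that $\Sigma=\varpi^{-1}\overline{V(J)}$ is cut out by the polynomials $Q(x^i_j)R(w^\iota_\kappa)-R(x^i_j)Q(w^\iota_\kappa)$ with $Q\in J$ and $R\in\mathbb{C}[X^i_j]^G$, so that $\Sigma$ depends only on the homogeneous ideal $J$. By Proposition \ref{propalg1} two projectively equivalent connections share the same projective Fano curve, hence the same defining ideal $J$, and therefore the same $\Sigma$.

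The third and main assertion is that the extra datum $\varpi^{-1}(\overline{V(I_G)})$ recovers $\nabla_0$ among the connections associated to $\pi\colon\Sigma\to X_0$. The plan is to reconstruct the full Picard-Vessiot ideal $I$ and, through it, the connection. First, since $\overline{V(I_G)}$ is one-dimensional and its affine part (where $z\neq 0$) is not contained in the exceptional locus of the blow-up $\varpi$, applying $\varpi$ to $\varpi^{-1}(\overline{V(I_G)})$ returns $\overline{V(I_G)}$, and with it the ideal $I_G$ (equivalently $I^G$). Reading the coordinate functions $P_l$ along this curve then recovers the values $f_l=P_l(y^i_j)\in\mathbb{C}(X_0)$, which is exactly the data that the cone $V(J)$, and hence $\Sigma$ by itself, forgets. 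By Compoint's theorem (Theorem \ref{compthm}) the ideal $I$ is generated over $k[X^i_j,\tfrac{1}{\det}]$ by the elements $P_l-f_l$, so $I$ is now determined; the Picard-Vessiot extension $K$ and the solution matrix $(y^i_j)$ follow, and differentiating $v(y^i_j)=a^i_ky^k_j$ returns the connection matrix $(a^i_j)$, that is, $\nabla_0$.

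The conceptual point underlying this reconstruction is the separation of direction from scale. Built from $J$, the ruled surface retains only the homogeneous (projective Fano) information, namely the directions $(y^i_j(p))$ up to scaling, whereas two standard connections associated to the same $\Sigma$ differ exactly by the scale functions $f_l$. The role of the homogenizing coordinate $z$ and of the blow-up $Y$ is to furnish, over each point of $X_0$, a fibre $\mathbb{P}^1$ along which this scale can vary, so that the curve $\varpi^{-1}(\overline{V(I_G)})$, meeting the ruling transversally, encodes the $f_l$ through its intersection with the fibres.

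The step I expect to be the main obstacle is verifying this faithful encoding: one must check that passing to the $G$-quotient and to the projective closure neither collapses the transverse curve into the ruling nor discards the affine value data, so that $\overline{V(I_G)}$---and, via Compoint's theorem, the connection itself---is genuinely recovered. Concretely, this amounts to confirming that $\varpi$ restricts to a birational map on $\varpi^{-1}(\overline{V(I_G)})$ and that the coordinate ring of $\overline{V(I_G)}$ reproduces the evaluation $P_l\mapsto f_l$ defining $I^G$; the remaining passages (Compoint's theorem and the recovery of $(a^i_j)$ by differentiation) are then formal.
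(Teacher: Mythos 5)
Your proposal is correct and follows essentially the same route as the paper: the first two claims are dispatched exactly as in the text (the previous lemma, the remark on the defining polynomials $Q(x^i_j)R(w^\iota_\kappa)-R(x^i_j)Q(w^\iota_\kappa)$, and Proposition \ref{propalg1}), and for the third claim your reconstruction --- the curve $\varpi^{-1}(\overline{V(I_G)})$ encodes the evaluation $P_l\mapsto f_l$, which by Compoint's theorem (cf. Remark \ref{rem1}) determines the maximal differential ideal $I$ and hence the connection --- is precisely the paper's argument. The paper's own proof is merely terser, observing that on $U=\{z\ne 0\}$ and $U_0=\{P\ne 0\}$ the map $P_i\mapsto f_i$ sends $U_0$ into $V(I_G)$ and is the map defining $I$; the verification you flag as the ``main obstacle'' (that the fibration $\pi$ faithfully encodes the $f_l$, not just the ideal $I_G$) is likewise left implicit there.
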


\begin{proof} The first two statements in the lemma follows from the previous one and the remark just above. Now if we restrict ourselves to the open set $U=\{z\ne 0\}$ on $\mathbb{P}^{n\times n}(\mathbb{C})$ and to the open set $U_0=\{P(q^\iota_\kappa)\ne 0\}$ on $X_0$, then the map (using the notation from Compoint's Theorem, Theorem \ref{compthm})
\begin{eqnarray*}
\mathbb{C}[X^i_j]^G & \longrightarrow & \mathbb{C}(X_0)\\
        P_i         & \longmapsto     & f_i
\end{eqnarray*}
sends, on the level of varieties, $U_0$ into $V(I_G)$. This map is the one defining the maximal differential ideal $I$ in Compoint's Theorem.\qed\end{proof}

\begin{rem}
The ruled surfaces obtained by blowing-up the vertex of a cone can also be obtained by taking the projective bundle defined by a rank-two holomorphic vector bundle over the base curve $X_0$. In order to obtain $\Sigma$ through a vector bundle, we are going to construct a two-dimensional vector bundle over $\mathbb{P}^{n\times n-1}(\mathbb{C})^G$ which we will then pullback to $X_0$. We adapt the exposition in \cite[Example V.2.11.4]{hart} to our specific setting.\\
Given a positive integer $N$ we denote by $\mathbb{C}[X^i_j]^G_{\ge N}$ the $\mathbb{C}[X^i_j]^G$-algebra of polynomials of degree greater than or equal to $N$. Note that
\[
\mathbb{C}[X^i_j]^G=\bigoplus_{m\in\mathbb{N}}\mathbb{C}[X^i_j]^G_m
\]
where $\mathbb{C}[X^i_j]^G_m$ denotes the homogeneous polynomial of degree $m$ and
\[
\mathbb{C}[X^i_j]^G_{[N]}=\bigoplus_{m\in\mathbb{N}}\mathbb{C}[X^i_j]^G_{mN}
\]
can both be used as homogeneous coordinates of $\mathbb{P}^{n\times n-1}(\mathbb{C})^G$.

Now assume $P_1,\ldots, P_r\in\mathbb{C}[X^i_j]^G$ is a minimal set of homogeneous generators of $\mathbb{C}[X^i_j]^G$. Denote by $n_i$ the degree of $P_i$, $i\in\{1,\ldots,r\}$, and by $N=[n_1,\ldots,n_r]$ the least common multiple of these degrees. Set $N=N_in_i$
\[
M_0=\mathbb{C}[X^i_j]^G\cdot Z\simeq\mathbb{C}[X^i_j]^G
\]
and
\[
M_N=\sum_{m_1n_1+\ldots+m_rn_r=N}\mathbb{C}[X^i_j]^G\prod_lP_l^{m_l}\simeq\mathbb{C}[X^i_j]^G_{\ge N}.
\]
Each of these two free graded $\mathbb{C}[X^i_j]^G$-modules define a rank-one holomorphic vector bundles $L_0\rightarrow \mathbb{P}^{n\times n-1}(\mathbb{C})^G$ and $L_N\rightarrow \mathbb{P}^{n\times n-1}(\mathbb{C})^G$ respectively. Let $V\rightarrow \mathbb{P}^{n\times n-1}(\mathbb{C})^G$ be the fibred sum of $L_0$ and $L_N$. Denote by $\mathscr{O}$ the sheaf of holomorphic sections of $L_0\rightarrow \mathbb{P}^{n\times n-1}(\mathbb{C})^G$ and by $\mathscr{O}$(N) the sheaf of holomorphic sections of $L_N\rightarrow \mathbb{P}^{n\times n-1}(\mathbb{C})^G$. In particular we have that the sheaf of sections of $V\rightarrow \mathbb{P}^{n\times n-1}(\mathbb{C})^G$ is $\mathscr{V}=\mathscr{O}\oplus\mathscr{O}(N)$.

We want to construct the graded $\mathbb{C}[X^i_j]^G$-algebra given by the symmetric algebra of $V$ (cf. \cite[Section II.7]{hart}). Set
\[
\mathbb{C}[X^i_j]^G[Z]\otimes_{\mathbb{C}[X^i_j]^G} \mathbb{C}[X^i_j]^G[\prod_lP_l^{m_l}|\ m_1n_1+\ldots+m_rn_r=N]=:\mathbb{C}[X^i_j]^G[Z][Y^i_j]^G_{\ge N}
\]
where $Z$ will have degree $1$ and $\mathbb{C}[Y^i_j]^G_{\ge N}$ will be graded by $\deg-(N-1)$. To obtain the desired algebra we take the quotient algebra defined by the $\mathbb{C}[X^i_j]^G$-morphism
\begin{eqnarray*}
\mathbb{C}[X^i_j]^G[Z][Y^i_j]^G_{\ge N}        & \longrightarrow & \mathbb{C}[X^i_j]^G\\
        Z                                      & \longmapsto     & 1\\
        Q(Y^i_j)                               & \longmapsto     & Q(X^i_j),\quad\textrm{for }Q(Y^i_j)\in\mathbb{C}[Y^i_j]^G_{\ge N}.
\end{eqnarray*}
The biggest homogeneous ideal in the kernel of this morphism is generated by the elements
\[
Q(X^i_j)R(Y^i_j)-R(X^i_j)Q(Y^i_j)\quad Q(X^i_j),R(X^i_j)\in \mathbb{C}[X^i_j]^G_{\ge N},
\]
which would yield the same ideal defining $Y^G$ if instead of using $\mathbb{C}[X^i_j]^G$ as homogeneous coordinate ring of $\mathbb{P}^{n\times n-1}(\mathbb{C})^G$ we use $\mathbb{C}[X^i_j]^G_{[N]}$. So we conclude that $Y^G$ corresponds to the projective bundle defined by $V\rightarrow \mathbb{P}^{n\times n-1}$.
\end{rem}

\begin{rem}
 Let us denote the pullback to $X_0$ of $\mathscr{V}$ (resp. of $\mathscr{O}$, of $\mathscr{O}(N)$) by $\mathscr{V}_{X_0}$ (resp. by $\mathscr{O}_{X_0}$, by $\mathscr{O}_{X_0}(N)$). Then as $Y^G$ is obtained as the projective bundle from $\mathscr{V}$, the portion $\Sigma$  over $X_0$ is obtained by taking the projective bundle defined by $\mathscr{V}_{X_0}$. In particular, since $\mathscr{V}_{X_0}=\mathscr{O}_{X_0}\oplus\mathscr{O}_{X_0}(N)$ and $\mathscr{O}_{X_0}$ is the sheaf of holomorphic functions over $X_0$, we conclude that $\Sigma$ is uniquely determined by $\mathscr{O}_{X_0}(N)$.\\
It is costumary to normalize $\mathscr{V}$ by tensoring with $\mathscr{O}(-N)=\textrm{Hom}_\mathscr{O}(\mathscr{O}(N),\mathscr{O})$. This does not change the induced projective bundle, so that the rank two vector bundle inducing $\Sigma$ is $\mathscr{O}_{X_0}\oplus\mathscr{O}_{X_0}(-N)$.
\end{rem}

\begin{thm}
Let $\nabla_0$ and $\nabla$ be two standard connections, defined on the same meromorphic vector bundle $E\rightarrow X_0$, with reductive Galois groups such that their projective Galois groups are isomorphic. Then $\nabla_0$ and $\nabla$ are projectively equivalent if and only if their associated ruled surface coincide.
\end{thm}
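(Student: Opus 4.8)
The guiding principle is that, for a standard connection, forming the ruled surface records exactly the maximal homogeneous ideal $J$ cutting out the cone $\overline{V(J)}$, whereas projective equivalence amounts to rescaling a fundamental system of solutions -- an operation that leaves $J$ untouched but slides the curve $V(I_G)$ within $\overline{V(J)}$ along its rulings. Both implications will be deduced from this correspondence, the hypothesis on the projective Galois groups serving only to place both constructions inside one and the same ambient quotient.

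For the direction ``projectively equivalent $\Rightarrow$ equal ruled surfaces'', one may choose fundamental systems $(y^i_j)$ for $\nabla$ and $(\tilde y^i_j)$ for $\nabla_0$ with $\tilde y^i_j=\lambda\, y^i_j$ for some $\lambda\in\mathbb{C}(X_0)^*$, exactly as in the computation preceding Proposition \ref{propalg1}. Then every homogeneous $P\in\mathbb{C}[X^i_j]^G$ satisfies $P(\tilde y^i_j)=\lambda^{\deg P}P(y^i_j)$, so $P$ vanishes at one system precisely when it vanishes at the other; the two maximal homogeneous ideals $J$ therefore coincide. Since the remark following Lemma \ref{rslem} shows that $\Sigma$ is cut out by the polynomials $Q(x^i_j)R(w^\iota_\kappa)-R(x^i_j)Q(w^\iota_\kappa)$ with $Q\in J$ and $R\in\mathbb{C}[X^i_j]^G$, the two ruled surfaces coincide. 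Here the hypothesis that the projective Galois groups $PG$ and $PG_0$ are isomorphic is used only to guarantee, after conjugating in $PGL_n(\mathbb{C})$, that the groups $\widetilde{G}$ of Lemma \ref{covgr} agree, so that both surfaces live inside the same $(\mathbb{P}^{n\times n}(\mathbb{C}))^G$.

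The converse is the heart of the matter. First I recover $J$ from the ruled surface: because $\Sigma=\varpi^{-1}(\overline{V(J)})$ and $\varpi$ is the restriction of the blow-down $Y^G\to(\mathbb{P}^{n\times n}(\mathbb{C}))^G$, the image $\varpi(\Sigma)=\overline{V(J)}$ is the cone and $J$ is its homogeneous ideal; equal ruled surfaces thus force the ideals $J$ to coincide, yielding a single common cone $\overline{V(J)}$. The two fundamental systems now provide two $\mathbb{C}(X_0)$-points of this one cone, given by $P_l\mapsto f_l:=P_l(y^i_j)$ and $P_l\mapsto\tilde f_l:=P_l(\tilde y^i_j)$. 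Since both connections are standard, Lemma \ref{covgr} gives $\mathbb{C}(X_0)=\mathbb{C}(y^i_j)^{\widetilde{G}}=\mathbb{C}(\tilde y^i_j)^{\widetilde{G}}$, so both points lie over the generic point of $X_0$ and hence differ only by a rescaling along the ruling: there is $\lambda\in\mathbb{C}(X_0)^*$ with $\tilde f_l=\lambda^{\deg P_l}f_l=P_l(\lambda\, y^i_j)$ for every generator. Consequently $(\tilde y^i_j)$ and $(\lambda\, y^i_j)$ share all $\widetilde{G}$-invariants, so by the Galois correspondence they differ by a right action of $\widetilde{G}=G\mu_n$; writing that element as $g\zeta$ with $g\in G$ and $\zeta\in\mu_n$ central, the scalar $\zeta$ is absorbed into $\lambda$ and one obtains $\tilde y^i_j=\lambda'\, y^i_k g^k_j$. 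This is precisely the relation characterizing projective equivalence -- tensoring $\nabla$ with the line bundle carrying the horizontal section $\lambda'$ -- and the proof concludes.

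The delicate point is the assertion that two standard systems sharing the same cone differ by a single scalar function times a group element. Establishing it requires, first, that the standard hypothesis $k=\mathbb{C}(X_0)$ genuinely forces the two sections of the cone to lie over the generic point of $X_0$ -- so that their ratio is an honest unit $\lambda\in\mathbb{C}(X_0)^*$ rather than a degeneration along a proper subvariety -- and, second, the use of $PG\cong PG_0$ to identify the ambient groups $\widetilde{G}$ and thereby reinterpret the residual $\mu_n$-ambiguity as part of the line-bundle scalar. Both reductions ultimately lean on Compoint's theorem (Theorem \ref{compthm}), which ensures that the entire comparison is governed by the $G$-invariants alone.
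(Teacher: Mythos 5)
Your necessity direction is fine and is essentially the paper's own: rescaling a fundamental system multiplies a degree-$d$ invariant by the $d$-th power of a nonvanishing function, so the homogeneous ideal $J$, hence $\Sigma$, is unchanged (Proposition \ref{propalg1} plus Lemma \ref{rslem}). One slip: the scaling factor is a horizontal section of the line bundle, so it lives in a Picard--Vessiot extension and is in general \emph{not} in $\mathbb{C}(X_0)^*$; this is harmless here since only $\lambda\neq 0$ is used.

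The converse contains a genuine gap, exactly at the step you flag as delicate: ``both points lie over the generic point of $X_0$ and hence differ only by a rescaling along the ruling.'' This is a non sequitur. A $\mathbb{C}(X_0)$-point of the cone dominating the base amounts to a dominant rational map of $X_0$ into the cone, and its projection to the projective Fano curve is a field embedding $\mathbb{C}(Z(J))\hookrightarrow\mathbb{C}(X_0)$. The two connections give two such embeddings; standardness makes each of them an isomorphism, but not the \emph{same} one--they can differ by a nontrivial automorphism $\sigma$ of $X_0$. Concretely, $(y^i_j)$ and $(y^i_j\circ\sigma)$ define the same cone (since $P(y^i_j\circ\sigma)=P(y^i_j)\circ\sigma$ vanishes iff $P(y^i_j)$ does) and both dominate $X_0$, yet in general no $\lambda\in\mathbb{C}(X_0)^*$ satisfies $f_l\circ\sigma=\lambda^{\deg P_l}f_l$ for all $l$. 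So the two points need not sit on a common ruling, and Compoint's theorem cannot rescue this: it controls the invariants inside a single Picard--Vessiot ideal and says nothing about how the two evaluation maps $\mathbb{C}[X^i_j]^G\to\mathbb{C}(X_0)$ compare with each other.

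The paper avoids comparing points on rulings altogether: it first normalizes both connections (a projective equivalence) so that the $n$-th exterior power is trivial, allowing $P=\det$ and $P(y^i_j)=1$; then, since $\det(X^i_j)-1$ is prime and $\overline{V(J)}$ is two-dimensional, the irreducible curve $\overline{V(I_G)}$ is forced to equal all of $\overline{V(J)\cap V(\det(X^i_j)-1)}$, a set determined by $J$ alone, and Lemma \ref{rslem} then identifies the normalized connections. Your argument has no substitute for this pinning-down step (and indeed the automorphism ambiguity above shows it cannot be bypassed by invariant-theoretic generalities alone; in the paper it is absorbed into Lemma \ref{rslem}). A secondary weakness: even granting a common ruling, equality of all $\widetilde{G}$-invariant values only gives $\tilde y=\lambda y\,g$ with $g$ a $\widetilde{G}$-valued \emph{function}, since invariants are blind to pointwise right translation; concluding projective equivalence needs $g$ constant, which requires a further differential argument you do not supply.
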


\begin{proof} The necessity has been already established in Lemma \ref{rslem}. We retain the same notation as above for the connection $\nabla_0$. In particular $\pi:\Sigma\rightarrow X_0$ is the associated ruled surface, and $P(y^i_j)=1$ for some fixed $P(X^i_j)\in\mathbb{C}[X^i_j]^G$. Because $\overline{V(J)}$ is a projective variety of dimension two, we have that the irreducible variety $\overline{V(I_G)}$ is in the closure of the curve
$V(J)\cap V(P(X^i_j)-1)$, and they coincide if $V(J)\cap V(P(X^i_j)-1)$ is irreducible. Without loss of generality we may assume that $\nabla_0$ and $\nabla$ are normalized so that their $n$-th exterior product has rational sections. In such a case $P(X^i_j)=\det(X^i_j)$, $P(X^i_j)-1$ is prime and
$\overline{V(I_G)}=\overline{V(J)\cap V(P(X^i_j)-1)}$. It follows that the curve in $\Sigma$ defining $\nabla_0$ and $\nabla$ coincide (cf. Lemma \ref{rslem}), whence $\nabla=\nabla_0$.\qed\end{proof}

\subsection*{Computing the ruled surfaces}

Although the proof above helps us characterizing the ruled
surfaces arising from standard equations, the proof offers little
towards effectively computing these. In fact, even if we manage to
get the $N$ in the expression
$\mathscr{V}=\mathscr{O}\oplus\mathscr{O}(N)$, it is not easy to
obtain the pullback and the blow up to explicitly get
$\mathscr{V}_{X_0}=\mathscr{O}_{X_0}\oplus\mathscr{O}_{X_0}(N)$.
To carry out the computations we will rely on the Nash blowing-up
(cf. \cite{nobile}).

\begin{defn}
Let $X$ be a subvariety of dimendion $r$ of a complex variety of dimension
$m$. Let $S$ be the set of singular
points of $X$ and $X_S$ its complement in $X$. Set
\begin{eqnarray*}
\eta :X_S & \longrightarrow & X\times G^m_r\\
       x  & \longmapsto     & (x,T_{X,x})
\end{eqnarray*}
where $G^m_r$ is the Grassmanian of $r$-planes in $m$-space and
$T_{X,x}$ is the tangent to $X$ at $x$ seen as an $r$-plane inside
the tangent to the ambient variety at $x$. The Nash blow-up is
\[
\varpi: X^*\longrightarrow X
\]
where $X^*$ is the closure of $\eta(X_S)$ and $\varpi$ is given by
the first projection.
\end{defn}

\begin{rem}
Nash blowing-up is a monoidal transformations \cite[Theorem 1]{nobile}, therefore it transforms the variety $X$ into a bi-rationally equivalent space \cite[Proposition 7.16]{hart}. The same is true for the blow-up at a point. Thus, the spaces obtained from the cone defined by $(y^i_j)$ via Nash blowing-up or one point blow-up are birationally equivalent.
\end{rem}

Let $P(X^i_j)$ be a homogeneous function of degree $d$ such that $P(y^i_j)=0$, then
\[
v\big(P(y^i_j)\big)=v(y^\iota_\kappa)\frac{\partial P}{\partial X^\iota_\kappa}(y^i_j)=0;
\]
and, from Euler's Theorem
\[
dP(y^i_j)=y^\iota_\kappa\frac{\partial P}{\partial X^\iota_\kappa}(y^i_j)=0.
\]
Therefore at the regular points of the cone defined by $(y^i_j)$, the tangent space is spanned by $(y^i_j)$ and $(v(y^i_j))$. From where we obtain:

\begin{prop}\label{compruled}
Let $\nabla$ be a standard connection with reductive and unimodular Galois group $G$, and let $P_0,\ldots,P_r$ be a set of homogeneous generators of $\mathbb{C}[X^i_j]^G$. If $(y^i_j)$ is a fundamental system of solutions, then the ruled surface associated to $\nabla$ corresponds to the projective bundle defined by the rank-two vector bundle $\mathscr{L}\oplus\mathscr{L}'$ where $\mathscr{L}$ is defined by the map (cf.\cite[Proposition 7.1]{hart})
\begin{eqnarray*}
\mathbb{C}[X^i_j]^G & \longrightarrow & k\\
P_i(X^i_j) & \longmapsto & P_i(y^i_j)
\end{eqnarray*}
and $\mathscr{L}'$ is defined by
\begin{eqnarray*}
\mathbb{C}[X^i_j]^G & \longrightarrow & k\\
P_i(X^i_j) & \longmapsto & P_i\big(v(y^i_j)\big)
\end{eqnarray*}
\end{prop}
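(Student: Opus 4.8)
The plan is to read off the ruled surface from the Nash blow-up of the cone generated by $(y^i_j)$, exploiting the computation recorded just before the statement: at every regular point of this cone the (two-dimensional) tangent space equals $\mathrm{span}\big((y^i_j),(v(y^i_j))\big)$, since $P(y^i_j)=0$ forces both $y^\iota_\kappa \tfrac{\partial P}{\partial X^\iota_\kappa}(y^i_j)=0$ (Euler) and $v(y^\iota_\kappa)\tfrac{\partial P}{\partial X^\iota_\kappa}(y^i_j)=0$ (chain rule). Because the tangent plane at $t(y^i_j)$ does not depend on the scalar $t$, the Grassmannian component of the Nash embedding $\eta$ factors through the Fano curve $X_0$, and its image is the family of planes $\mathrm{span}\big((y^i_j),(v(y^i_j))\big)$ parametrized by $X_0$. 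I would take the tautological rank-two bundle along this map as the bundle $T$ whose fibre over a point of $X_0$ is that tangent plane.

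First I would split $T$ into line subbundles. Over the locus where $(y^i_j)$ and $(v(y^i_j))$ are linearly independent the line $\langle (y^i_j)\rangle$ and the line $\langle (v(y^i_j))\rangle$ meet trivially in each fibre, so $T=\mathscr{L}\oplus\mathscr{L}'$ with $\mathscr{L}=\langle(y^i_j)\rangle$ and $\mathscr{L}'=\langle(v(y^i_j))\rangle$. Then I would identify these bundles with the two morphisms in the statement: reading the subbundle $\mathscr{L}$ off in the $G$-invariant coordinates $P_0,\ldots,P_r$ gives exactly the generating sections $P_i(y^i_j)$, so by \cite[Proposition 7.1]{hart} $\mathscr{L}$ is the line bundle attached to $P_i\mapsto P_i(y^i_j)$; likewise evaluating the second subbundle through the same invariants produces the sections $P_i\big(v(y^i_j)\big)$, whence $\mathscr{L}'$ is the bundle attached to $P_i\mapsto P_i\big(v(y^i_j)\big)$.

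It remains to show that projectivizing $T$ recovers the ruled surface $\pi:\Sigma\to X_0$. Here I would use the preceding remark that the Nash blow-up and the one-point (vertex) blow-up of the cone are birationally equivalent monoidal transformations, so that both produce a ruled surface over $X_0$ agreeing fibrewise over the regular locus; matching the rulings of $\Sigma$ (the strict transforms of the lines of the cone) with the lines $\langle(y^i_j)\rangle\subseteq T$ then yields $\Sigma=\mathbb{P}(T)=\mathbb{P}(\mathscr{L}\oplus\mathscr{L}')$, completing the proof.

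The main obstacle I expect is precisely this last identification. Birational equivalence of two $\mathbb{P}^1$-bundles over the same curve does not by itself force them to be isomorphic --- they may differ by elementary transformations --- so one must control the behaviour over the finitely many points of $X_0$ at which $(y^i_j)$ and $(v(y^i_j))$ become linearly dependent and the tangent plane degenerates. The delicate point is to check that the splitting $\mathscr{L}\oplus\mathscr{L}'$ extends across these points as honest line subbundles, which is legitimate because $\mathscr{L}$ and $\mathscr{L}'$ are each defined over all of $X_0$ by their respective morphisms via \cite[Proposition 7.1]{hart}. Once the two bundles are pinned down globally the projective bundle $\mathbb{P}(\mathscr{L}\oplus\mathscr{L}')$ is determined; verifying that the birational map of the previous remark extends to a genuine isomorphism with $\Sigma$, rather than merely to an elementary transformation of it, is the crux of the argument.
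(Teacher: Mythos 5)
Your route is the paper's own: the tangent-plane computation preceding the statement (Euler's theorem plus the chain rule), the Nash blow-up, descent to $X_0$, identification of the two line bundles via \cite[Proposition 7.1]{hart}, and comparison with the vertex blow-up through the remark on monoidal transformations. There is, however, one step whose justification is wrong as written, and it is precisely the step that constitutes the paper's proof. You assert that, because the tangent plane at $t(y^i_j)$ does not depend on the scalar $t$, the Grassmannian component of $\eta$ ``factors through the Fano curve $X_0$''. Independence of the scalar only makes this assignment factor through the projectivized cone sitting upstairs in $\mathbb{P}^{n\times n-1}(\mathbb{C})$; it does not factor through $X_0$, because $X_0$ is the $G$-quotient of that curve and the tangent planes are not constant along $G$-orbits: at the point $(y^i_l g^l_j)$ the tangent plane is the translate $\mathrm{span}\big((y^i_j),(v(y^i_j))\big)\cdot g$, not $\mathrm{span}\big((y^i_j),(v(y^i_j))\big)$. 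Equivalently, $p\mapsto T_p$ is only a multivalued assignment on $X_0$, with branches permuted by monodromy, so the ``tautological rank-two bundle $T$'' over $X_0$ whose fibres are actual planes in $\mathbb{C}^{n\times n}$ does not exist. What does exist, and what the paper's proof consists of, is the observation that $G$ acts linearly, hence acts on tangent spaces exactly as it acts on $\mathbb{C}^{n\times n}$, so that $\eta$ is a $G$-morphism; one then passes to the quotient of the whole diagram (cone and Grassmannian alike) by $G$, and the data surviving on $X_0$ are exactly the invariant evaluations $P_i(y^i_j)$ and $P_i\big(v(y^i_j)\big)$, which lie in $k=\mathbb{C}(X_0)$ by the Galois correspondence. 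Your later phrase about ``reading the subbundle off in the $G$-invariant coordinates'' implicitly performs this descent, so the defect is reparable, but the $G$-equivariance must be made explicit: it, and not $t$-independence, is what carries the construction from the cone down to $X_0$.

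Concerning the point you call the crux --- that birational equivalence of the Nash and vertex blow-ups does not by itself give an isomorphism of ruled surfaces over $X_0$, the danger being elementary transformations at the finitely many points where $(y^i_j)$ and $(v(y^i_j))$ become proportional --- you have correctly located the thinnest point of the argument, but you should know the paper's own proof does not address it either: it ends with the equivariant descent and leans on the preceding material, namely the birational comparison of the two blow-ups and the earlier identification of $\Sigma$ with $\mathbb{P}\big(\mathscr{O}_{X_0}\oplus\mathscr{O}_{X_0}(N)\big)$ obtained from \cite[Example V.2.11.4]{hart}. So your proposal is no less complete than the published proof on this point; a fully rigorous treatment would indeed have to check that $\mathscr{L}$ and $\mathscr{L}'$, as produced by \cite[Proposition 7.1]{hart}, give the correct fibres over the degenerate points, which neither you nor the paper carries out.
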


\begin{proof}
Identifying all the tangent spaces to $\mathbb{C}^{n\times n}$ with the tangent at the origin, $G$ acts on the tangent spaces in the same way as it acts on $\mathbb{C}^{n\times n}$. This shows the map $\eta$ in the definition of the Nash blowing-up as a $G$-morphism. Taking quotients by the $G$-action, the $2$-space spanned by $(y^i_j)$ and $(v(y^i_j))$ maps to the space spanned by $(y^i_j)\cdot G$ and $(v(y^i_j))\cdot G$.
\end{proof}

\section*{Appendix: The algebraic case}

\subsection*{Computing the genus of the projective curve defined by $(y^i_j)$.}
\begin{rem}
The content of this part of the appendix is a generalization of the presentation of \cite[Lemma 1.5]{baldassarri} to higher order.
\end{rem}

Given a $n$-th order irreducible linear differential equation $L(y)=0$ over $X$ with algebraic solutions, and a point $p\in X$, we denote by
\[
E(L,p)=\{\alpha_{1,p},\alpha_{2,p},\ldots,\alpha_{n,p}\}
\]
the collection of generalized exponents of $L(y)=0$ at $p$ (i.e. the roots of the indicial polynomial, which are all rational because the solutions are algebraic) ordered so that $\alpha_{i,p}<\alpha_{j,p}$ if $i<j$. We set
\[
\Delta(L,p)=\alpha_{n,p}-\alpha_{1,p}-(n-1)
\]
and we let $e(L,p)$ be the least common denominator of $E(L,p)-E(L,p)=\{a-b|\ a,b\in E(L,p)\}$ (i.e. the smallest $m\in\mathbb{N}$ such that $m(E(L,p)-E(L,p))\subseteq\mathbb{Z}$. If $S\subseteq X$ we set
\[
\Delta(L,S):=\sum_{p\in S}\Delta(L,p).
\]
As $E(L,p)=\{0,1,\ldots,n-1\}$ for almost all $p\in X$, we have $\Delta(L,p)=0$ for all but finitely many $p\in X$, and for a sufficiently large $S$, the number $\Delta(L,S)$ attains limiting value $\Delta(L)$.

\begin{lem}
Let $f:X\rightarrow X_0$ be a morphism of compact Riemann surfaces of degree $M$, and assume that $L(y)=0$ is the pullback of $L_0(y)=0$ through $f$. Furthermore, assume all solutions to $L_0(y)=0$ are algebraic. Then, if $g$ (resp. $g_0$) denotes the genus of $X$ (resp. of $X_0$), we have:
\[
M(\frac{\Delta(L_0)}{n-1}-2(g_0-1))=\frac{\Delta(L)}{n-1}-2(g-1).
\]
\end{lem}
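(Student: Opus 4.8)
The plan is to reduce the identity to the Riemann--Hurwitz formula by recording how the generalized exponents transform under the ramified covering $f$. For $p\in X$ let $e_p$ denote the ramification index of $f$ at $p$ and write $q=f(p)$. Since $L(y)=0$ is the pullback of $L_0(y)=0$, in a local coordinate $s$ at $p$ and $t$ at $q$ the map reads $t=s^{e_p}$, so a local solution of $L_0$ behaving like $t^{\alpha}$ pulls back to one behaving like $s^{e_p\alpha}$. The first step is thus to establish that
\[
E(L,p)=e_p\cdot E(L_0,q),
\]
i.e. the indicial roots at $p$ are those at $q$ scaled by $e_p$. Because $e_p>0$ this scaling preserves the ordering, so $\alpha_{1,p}=e_p\alpha_{1,q}$ and $\alpha_{n,p}=e_p\alpha_{n,q}$.

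With this in hand, the second step is the purely local computation
\[
\Delta(L,p)=e_p\bigl(\alpha_{n,q}-\alpha_{1,q}\bigr)-(n-1)=e_p\,\Delta(L_0,q)+(e_p-1)(n-1).
\]
Summing over $p\in X$ and grouping by the fibers of $f$, and using that $\sum_{p\in f^{-1}(q)}e_p=M$ for every $q\in X_0$, the first term contributes $M\,\Delta(L_0)$, while the second contributes $(n-1)\sum_{p\in X}(e_p-1)=(n-1)R$, where $R$ is the total ramification of $f$. Hence
\[
\Delta(L)=M\,\Delta(L_0)+(n-1)R.
\]

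Finally, I would invoke Riemann--Hurwitz in the form $2g-2=M(2g_0-2)+R$ to substitute $R=2(g-1)-2M(g_0-1)$, divide through by $n-1$, and rearrange to isolate the claimed equality. The main obstacle is the first step: one must justify carefully that the generalized exponents (the roots of the indicial polynomial, which are rational here because the solutions are algebraic) genuinely scale by the ramification index under pullback, paying attention to the local-analytic description of solutions at a ramified point and to the fact that the \emph{generalized} exponents may a priori be read off in a ramified completion. Once the scaling $E(L,p)=e_p\cdot E(L_0,q)$ is secured, everything else is bookkeeping plus Riemann--Hurwitz; and since $\Delta$ depends only on the extreme exponents $\alpha_{1,\cdot}$ and $\alpha_{n,\cdot}$, which are preserved by multiplication by the positive integer $e_p$, no further care about multiplicities or coincidences among the intermediate exponents is needed.
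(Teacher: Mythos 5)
Your proposal is correct and follows essentially the same route as the paper: both proofs rest on the scaling $E(L,p)=e_p\cdot E(L_0,f(p))$ of the exponents under pullback, a local computation of $\Delta(L,p)$ in terms of $\Delta(L_0,f(p))$ and $e_p$, and then the Riemann--Hurwitz formula to eliminate the ramification term. If anything, your bookkeeping via $\sum_{p\in f^{-1}(q)}e_p=M$ and the total ramification $R$ is slightly more careful than the paper's, which writes $\textrm{Card}(f^{-1}(p_0))=M/e_{p_0}$ and thus tacitly assumes all points of a fiber share the same ramification index (true in the Galois setting of the subsequent application, but not needed in your version).
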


\begin{proof} Let $S_0\in X_0$ be a finite collection of points containing all ramifications of $f$ and all singularities of $L_0(y)=0$, and set $S:=f^{-1}(S_0)$. So, if $e_{p_0}$ denotes the ramification index of $p_0$ in $f$, then
\begin{eqnarray*}
\Delta(L,f^{-1}(p_0)) & = & \sum_{p|p_0}\Delta(L,p)\\
                      & = & \sum_{p|p_0}\alpha_{n,p}-\alpha_{1,p}-(n-1)\\
                      & = & \sum_{p|p_0}e_{p_0}(\alpha_{n,p_0}-\alpha_{1,p_0})-(n-1))\\
                      & = & \frac{M}{e_{p_0}}(e_{p_0}(\alpha_{n,p_0}-\alpha_{1,p_0})-(n-1))\\
                      & = & M(\Delta(L_0,p_0)+(n-1))-\textrm{Card}(f^{-1}(p_0))(n-1).
\end{eqnarray*}
Thus $\Delta(L,f^{-1}(p_0))+(n-1)\textrm{Card}(f^{-1}(p_0))=M(\Delta(L_0,p_0)+(n-1))$ and
\[
\Delta(L,S)+(n-1)\textrm{Card}(S)=M(\Delta(L_0,S_0)+(n-1)\textrm{Card}(S_0))
\]
As $S_0$ contains all ramifications of $f$, the Hurwitz genus formula implies
\[
2(g-1)-2M(g_0-1)=M\ \textrm{Card}(S_0)-\textrm{Card}(S).
\]
Combining the last two equalities we obtain the desired conclusion by noticing that $S_0$ contains all singularities of $L_0(y)=0$.\qed\end{proof}

\begin{prop}\label{lemhur}
Under the hypotheses of the lemma we have:
\[
\sum_{p_0\in S_0}\left(\frac{1}{e(L_0,p_0)}-1\right)=2(g_0-1)-\frac{2(g-1)}{M}.
\]
\end{prop}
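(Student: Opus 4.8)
The plan is to combine the Lemma just proved with the local exponent computation carried out inside its proof, thereby reducing the whole statement to a single local identity at each branch point. First I would divide the conclusion of the Lemma by $M$ and rearrange it so as to isolate exactly the right-hand side of the Proposition, obtaining $2(g_0-1)-\frac{2(g-1)}{M}=\frac{\Delta(L_0)}{n-1}-\frac{\Delta(L)}{M(n-1)}$. So it suffices to compute the difference $\frac{\Delta(L_0)}{n-1}-\frac{\Delta(L)}{M(n-1)}$ and match it against $\sum_{p_0\in S_0}\left(\frac{1}{e(L_0,p_0)}-1\right)$.

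Next I would invoke the per-fiber relation established while proving the Lemma, namely $\Delta(L,f^{-1}(p_0))+(n-1)\,\textrm{Card}(f^{-1}(p_0))=M(\Delta(L_0,p_0)+(n-1))$. Summing over $p_0\in S_0$ and using $\Delta(L)=\sum_{p_0}\Delta(L,f^{-1}(p_0))$ together with $\Delta(L_0)=\sum_{p_0}\Delta(L_0,p_0)$ (legitimate since $S_0$ is taken large) yields $\frac{\Delta(L_0)}{n-1}-\frac{\Delta(L)}{M(n-1)}=\frac{\textrm{Card}(S)}{M}-\textrm{Card}(S_0)=\sum_{p_0\in S_0}\left(\frac{\textrm{Card}(f^{-1}(p_0))}{M}-1\right)$. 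Comparing with the target term by term, the Proposition collapses to the local claim $\textrm{Card}(f^{-1}(p_0))=\frac{M}{e(L_0,p_0)}$ for every $p_0\in S_0$; equivalently, since $f$ is uniformly ramified of index $e_{p_0}$ over $p_0$ (as the Lemma's proof already presupposes when it writes $\frac{M}{e_{p_0}}$ for the fiber count), the claim is $e_{p_0}=e(L_0,p_0)$.

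The hard part will be precisely this local identity $e_{p_0}=e(L_0,p_0)$: the ramification index of $f$ over $p_0$ must equal the least common denominator of the exponent differences of $L_0$ at $p_0$. I would argue it as follows. Because all solutions of $L_0$ are algebraic, the local monodromy of $L_0$ at $p_0$ is finite and semisimple (no logarithmic terms), so a local basis of solutions takes the form $z_0^{\alpha_{j,p_0}}h_j(z_0)$ with $h_j$ holomorphic and nonvanishing; hence the local projective monodromy is cyclic of order exactly $e(L_0,p_0)$, the denominator governing the ratios $e^{2\pi i(\alpha_{j,p_0}-\alpha_{1,p_0})}$. Pulling back through a point $p$ of ramification index $e_p$ substitutes $z^{e_p}$ for $z_0$, so the local projective monodromy of $L=f^*L_0$ at $p$ is the $e_p$-th power of that of $L_0$ at $p_0$; this is trivial exactly when $e(L_0,p_0)\mid e_p$. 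Since $f$ is, by construction, the covering on which the projective solution map of $L_0$ becomes single-valued and no finer, $e_p$ is the minimal such multiple, i.e. $e_p=e(L_0,p_0)$; uniform ramification then gives $\textrm{Card}(f^{-1}(p_0))=M/e(L_0,p_0)$.

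Finally I would substitute the local identity back, recovering $\sum_{p_0}\left(\frac{1}{e(L_0,p_0)}-1\right)=\sum_{p_0}\left(\frac{\textrm{Card}(f^{-1}(p_0))}{M}-1\right)=2(g_0-1)-\frac{2(g-1)}{M}$, which is the assertion. I expect the algebraic bookkeeping to be routine once the Lemma is rearranged; the genuine content, and the only place where the algebraicity of the solutions and the precise meaning of $e(L_0,p_0)$ enter, is the identification of the ramification index with the order of the local projective monodromy.
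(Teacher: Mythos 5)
Your proposal is correct and follows essentially the same route as the paper's proof: rearrange the Lemma, feed in the per-fiber identity $\Delta(L,f^{-1}(p_0))+(n-1)\,\mathrm{Card}(f^{-1}(p_0))=M(\Delta(L_0,p_0)+(n-1))$ from its proof, and reduce everything to the local identity $e_{p_0}=e(L_0,p_0)$ for the (Galois, hence uniformly ramified) covering determined by the ratios of solutions. The only difference is that the paper, after stipulating that $f$ corresponds to the extension $\mathbb{C}(X_0)\subseteq\mathbb{C}(X_0)[\frac{y_1}{y_n},\ldots,\frac{y_{n-1}}{y_n}]$, simply asserts $e_{p_0}=e(L_0,p_0)$, whereas you supply the correct justification via the order of the local projective monodromy (finite, hence log-free, with eigenvalue ratios $e^{2\pi i(\alpha_{i,p_0}-\alpha_{j,p_0})}$).
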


\begin{proof} Suppose $f$ corresponds to the field extension
\[
\mathbb{C}(X_0)\subseteq\mathbb{C}(X_0)[\frac{y_1}{y_n},\ldots,\frac{y_{n-1}}{y_n}],
\]
where $y_1,\ldots,y_n$ denote a full system of solutions of $L_0(y)=0$. Then $e_{p_0}=e(L_0,p_0)$, and it follows that
\begin{eqnarray*}
\frac{1}{n-1}(M\Delta(L_0)-\Delta(L)) & = & \frac{1}{n-1}\Big\{ M\sum_{p_0\in S_0}(\alpha_{n,p_0}-\alpha_{1,p_0}-(n-1))\\
                                      &   & \qquad -M\sum_{p_0\in S_0}(\alpha_{n,p_0}-\alpha_{1,p_0}-\frac{n-1}{e(L_0,p_0)})\Big\}\\
                                      & = & \frac{M}{n-1}\sum_{p_0\in S_0}\left(\frac{n-1}{e(L_0,p_0)}-(n-1)\right)\\
                                      & = & M\sum_{p_0\in S_0}\left(\frac{1}{e(L_0,p_0)}-1\right).
\end{eqnarray*}
The statement now follows from the previous lemma. \qed\end{proof}

\begin{exa}
Consider the three following equations: Ulmer's $G_{54}$ equation ~\cite{ulmer}
{\footnotesize
\[
y'''+\frac{3(3x^2-1)}{x(x-1)(x+1)}y''+\frac{221x^4-206x^2+5}{12x^2(x-1)^2(x+1)^2}y'+\frac{374x^6-673x^4+254x^2+5}{54x^3(x-1)^3(x+1)^3}y=0
\]
}
with singular points at $0$, $1$, $-1$ and $\infty$, with respective exponents
\[
\{-\frac{1}{6},\frac{1}{3},\frac{4}{3}\},\quad\{-\frac{1}{6},\frac{1}{3},\frac{4}{3}\}
\quad\{-\frac{1}{6},\frac{1}{3},\frac{4}{3}\},\quad\{-\frac{1}{6},\frac{1}{3},\frac{4}{3}\};
\]
the Geiselmann-Ulmer $F_{36}^{SL_3}$ equation ~\cite{Geis}
\[
y'''+\frac{5(9x^2+14x+9)}{48x^2(x+1)^2}y'-\frac{5(81x^3+185x^2+229x+81)}{432x^3(x+1)^3}y=0
\]
with singular points at $0$, $1$ and $\infty$, with respective exponents
\[
\{1,\frac{3}{4},\frac{5}{4}\},\quad \{\frac{5}{6},\frac{11}{6},\frac{1}{3}\},\quad \{-1,\frac{-3}{4},\frac{-5}{4}\};
\]
and the equation
\[
y'''+\frac{1}{48}\frac{41z^2-50z+45}{(z-1)^2z^2}y'-\frac{1}{432}\frac{364z^3-665z^2+1030z-405}{(z-1)^3z^3}y=0
\]
with singularities at $0$, $1$ and $\infty$ with respective exponents
\[
\{\frac{3}{4},1,\frac{5}{4}\},\quad\{\frac{1}{2},1,\frac{3}{2}\},\quad\{-\frac{4}{3},-\frac{13}{12},-\frac{7}{12}\}.
\]
For the first equation we have $g_0=0$, $M=|PG_{54}|=18$ and $e_0=e_1=e_{-1}=e_\infty=2$, so that
\[
\sum_{i\in\{0,1,-1,\infty\}}\left(1-\frac{1}{e_i}\right)=2;
\]
for the second equation we have $g_0=0$, $M=|F_{36}|=36$ and $e_0=e_\infty=4$, $e_1=2$ so that
\[
\sum_{i\in\{0,1,\infty\}}\left(1-\frac{1}{e_i}\right)=2;
\]
and, for the third equation we again have $g_0=0$, $M=|F_{36}|=36$ and $e_0=e_\infty=4$, $e_1=2$. This tells us that the algebraic extension given by the ratio of solutions is a curve of genus $1$. Indeed, the three equations are related to the generalized hypergeometric equation defining ${}_3F_2(-\frac{1}{12},\frac{1}{6},\frac{2}{3},\frac{1}{2},\frac{3}{4};z)$, i.e.
{\small
\[
y'''+\frac{3}{4}\frac{5z-3}{z(z-1)}y''+\frac{1}{24}\frac{43z-9}{z^2(z-1)}y'-\frac{1}{108z^2(z-1)}y=0.
\]
}
The first of our equations is projectively equivalent to the pullback of this one by the map $z(x)=\frac{1}{16}\frac{(x^2+1)^4}{x^2(x+1)^2(x-1)^2}$; the second one is projectively equivalent to the pullback by the map $z(x)=\frac{4(x-1)}{x^2}$; and the third one is the normalized form which is standard. If one takes three linearly independent solutions to each of these equations, we can see that they satisfy a homogeneous equation with coefficients in $\mathbb{C}$ of degree $3$ in three variables defining an elliptic curve.
\end{exa}

\begin{exa}
Consider the following two equations. The first is van Hoeij's $H_{72}^{SL_3}$ equation ~\cite{ulmer}, i.e.
\begin{eqnarray*}
0 & = & y'''+\frac{21x^2-24x-1}{(3x^2+1)(x-1)}y''+\frac{1}{48}\frac{4437x^3-5973x^2+171x-683}{(3x^2+1)^2(x-1)}y' \\
  &   & \qquad +\frac{1}{216}\frac{13338x^4-22647x^3+1983x^2-7297x-737}{(3x^2+1)^3(x-1)}y.
\end{eqnarray*}
The singular points are $1$ (which actually is an apparent singularity), $\frac{i\sqrt{3}}{3}$, $-\frac{i\sqrt{3}}{3}$ and $\infty$, with respective exponents
\[
\{0,1,3\},\quad\{-\frac{7}{12},-\frac{1}{3},-\frac{1}{12}\},
\quad\{-\frac{7}{12},-\frac{1}{3},-\frac{1}{12}\},\quad\{\frac{13}{12},\frac{4}{3},\frac{19}{12}\}.
\]
The second is
{\small
\[
y'''+\frac{1}{432}\frac{405z^2-469z+384}{(z-1)^2z^2}-\frac{1}{11664}\frac{10935z^3-18803z^2+27196z-10368}{(z-1)^3z^3}=0.
\]
}
Here the singular points are $0$, $1$ and $\infty$, with respective exponents
\[
\{\frac{2}{3},1,\frac{4}{3}\},\quad\{\frac{5}{9},\frac{8}{9},\frac{14}{9}\},\quad\{-\frac{5}{4},-1,-\frac{3}{4}\}.
\]
For the first equation we have $g_0=0$, $M=|H_{72}|=72$ and $e_\frac{i\sqrt{3}}{3}=e_{-\frac{i\sqrt{3}}{3}}=e_\infty=4$, so that
\[
\sum_{j\in\{\frac{i\sqrt{3}}{3},-\frac{i\sqrt{3}}{3},\infty\}}\left(1-\frac{1}{e_j}\right)=\frac{9}{4}=\frac{2(10-1)}{M}+2.
\]
For the second equation we have $g_0=0$, $M=|H_{216}|=216$ and $e_0=e_1=3$, $e_\infty=4$, so that
\[
\sum_{i\in\{0,1,\infty\}}\left(1-\frac{1}{e_i}\right)=\frac{25}{12}=\frac{2(10-1)}{M}+2.
\]
This tells us that the algebraic extension given by the ratio of solutions is a curve of genus $10$. Indeed, the two equations are related to the generalized hypergeometric equation ${}_3F_2(-\frac{1}{36},\frac{2}{9},\frac{17}{36},\frac{1}{3},\frac{2}{3};z)$, i.e.
\[
y'''+\frac{1}{3}\frac{11z-6}{z(z-1)}y''+\frac{1}{432}\frac{-96+757z}{z^2(z-1)}y'-\frac{17}{5832}\frac{1}{z^2(z-1)}=0.
\]
The first of our equations is projectively equivalent to the pullback of this one by the map $z(x)=\frac{1}{2}\frac{(x+1)^3}{(1+3x^2)}$; whereas the second one is the normalized form and is standard. If one takes three linearly independent solutions to each of these equations, we can see that they satisfy a homogeneous equation of degree $6$ in three variables defining a curve of genus $10$.
\end{exa}

\subsection*{The ruled surfaces for the second order standard equations}

The standard equation $St_G$ of second order for the Galois group $G$ is \cite{ber}
\[
y''+\left(\frac{a}{x^2}+\frac{b}{(x-1)^2}+\frac{c}{x(x-1)}\right)y=0
\]
where $a=\frac{1-\lambda^2}{4}$, $b=\frac{1-\mu^2}{4}$, and $c=\frac{\lambda^2+\mu^2-1-\nu^2}{4}$; and, $(\lambda,\mu,\nu)$ is $(1/3,1/2,1/3)$ if $G=A_4$, $(1/3,1/2,1/4)$ if $G=S_4$, $(1/3,1/2,1/5)$ if $G=A_5$, and $(1/2,1/2,1/n)$ if $G=D_{2n}$.
Now, if $(y_1,y_2)$ are the solutions to the equation
\[
y''-fy=0,
\]
then $(y'_1,y'_2)$ are the solutions to the equation
\[
y''-\frac{f'}{f}y'-fy=0.
\]
In this way we can easily compute $\mathscr{L}$ and $\mathscr{L}'$ from Proposition \ref{compruled} using the algorithm in \cite{We}.

\begin{exa}
For $St_{A_4}$ the invariants of degree $24$ are spanned by
\[
x^8(x-1)^8,\quad x^8(x-1)^7,\quad x^9(x-1)^6, \textrm{ and } x^8(x-1)^6
\]
so $\mathscr{L}$ is generated by $1$, $x$, $x-1$, $(x-1)^2$, hence $\mathscr{L}=\mathscr{O}(2)$. On the other hand the equation $St'_{A_4}$ with solutions the derivative of the solutions to $St_{A_4}$ has invariants of degree $24$ spanned by
\[
\frac{f_6^4}{(x-1)^{16}x^{16}},\quad \frac{f_6^2f_{12}}{(x-1)^{17}x^{16}},\quad \frac{f_8^3}{(x-1)^{18}x^{15}}, \textrm{ and } \frac{f_{12}^2}{(x-1)^{18}x^{16}}
\]
where $f_l$, $l\in\{6,8,12\}$, is a polynomial of degree $l$; so $\mathscr{L}'$ is generated by $1$, $x\frac{f_8^3}{f_{12}^2}$, $(x-1)\frac{f_6^2}{f_{12}}$, $(x-1)^2\frac{f_6^4}{f_{12}^2}$, hence $\mathscr{L}'=\mathscr{O}(26)$. The ruled surface corresponding to $A_4$ is the projective bundle defined by $\mathscr{O}(2)\oplus\mathscr{O}(26)$, which is the same as the one defined by $\mathscr{O}\oplus\mathscr{O}(24)$.
\end{exa}

Similar computations shows that:
\begin{itemize}
\item for $A_4$ the ruled surface is $\mathbb{P}\Big(\mathscr{O}(2)\oplus\mathscr{O}(26)\Big)$;
\item for $S_4$ the ruled surface is $\mathbb{P}\Big(\mathscr{O}(1)\oplus\mathscr{O}(25)\Big)$;
\item for $A_5$ the ruled surface is $\mathbb{P}\Big(\mathscr{O}(1)\oplus\mathscr{O}(61)\Big)$;
\item for $D_{2n}$ the ruled surface is $\mathbb{P}\Big(\mathscr{O}(2)\oplus\mathscr{O}(2[2n+1])\Big)$, if $2\!\not|n$; and,
\item for $D_{2n}$ the ruled surface is $\mathbb{P}\Big(\mathscr{O}(1)\oplus\mathscr{O}(2n+1)\Big)$, if $2|n$.
\end{itemize}

\end{document}